\documentclass[12pt,a4paper]{amsart}
\usepackage{amsfonts,amsmath,amssymb,amsthm}
\usepackage[width=16cm,height=23cm]{geometry}

\usepackage[utf8]{inputenc}
\usepackage[T1]{fontenc}

\usepackage{comment}
\usepackage{color}

\setlength{\oddsidemargin}{0mm}
\setlength{\evensidemargin}{0mm}
\setlength{\topmargin}{0mm}
\setlength{\parskip}{3mm}
\newtheorem{dfi}{Definition}[section]
\newtheorem{lem}[dfi]{Lemma}
\newtheorem{pro}[dfi]{Proposition}
\newtheorem{thm}[dfi]{Theorem}

\newtheorem{ex}[dfi]{Example}

\newcommand{\lqv}{{L^{q(\cdot)}(\Omega)}}
\newcommand{\lpvo}{{L^{p(\cdot)}[0,1]}}
\newcommand{\lqvo}{{L^{q(\cdot)}[0,1]}}

\newcommand{\esr}{{R_{p(\cdot)}}}
\newcommand{\rw}{\rightarrow}
\newcommand{\lpv}{{L^{p(\cdot)}(\Omega)}}
\newcommand{\mi}{{p_{|A_k}^-}}
\newcommand{\ma}{{p_{|A_k}^+}}

\begin{document}

\setlength{\baselineskip}{16pt}

\title[Variable exponent spaces. Structure.]{On the structure of variable  exponent spaces}
\author[J. Flores, F.L. Hern\'{a}ndez, C. Ruiz and  M. Sanchiz]{Julio Flores $^{1,2}$, Francisco L. Hern\'{a}ndez $^1$, C\'{e}sar Ruiz$^1$ and  Mauro Sanchiz}
\address{Departamento de Matemática Aplicada, Ciencia e Ingeniería  de Materiales y Tecnología Electrónica, ESCET, Universidad Rey Juan Carlos, 28933, Móstoles, Madrid, Spain.}
\email[Julio Flores]{julio.flores@urjc.es}
\address{Department of Mathematical Analysis and Applied Mathematics and imi, Faculty of Mathematics, Complutense University of Madrid, 28040 Madrid, Spain.}
\email[Francisco L. Hern\'{a}ndez]{pacoh@ucm.es}
\email[C\'{e}sar Ruiz]{cruizb@mat.ucm.es}
\email[Mauro Sanchiz]{msanchiz@ucm.es}
\thanks{$^1$Partially supported by grant MTM2016-76808-P}
\thanks{$^2$Partially supported by grant PGC2018-101625-B-I00}
\subjclass[2000]{46E30,47B60}
\date{November 2019}
\dedicatory{Dedicated to the memory of Wim Luxemburg}
\begin{abstract}
The  first part of this paper  surveys several results on the lattice structure of variable exponent Lebesgue function spaces (or Nakano spaces) $\lpv$. In the second part strictly singular and disjointly strictly singular operators between spaces $\lpv$ are studied. New results  on  the  disjoint strict singularity of the  inclusions $ L^{p(\cdot)}(\Omega)  \hookrightarrow  L^{q(\cdot)}(\Omega)$  are given.

\end{abstract}
\maketitle

\section{Introduction}
Classical  variable exponent Lebesgue spaces (or  Nakano  spaces)\, $\lpv$ \, have recently been successfully used in some areas of Harmonic Analysis and PDEs.  (\cite{Libro},\cite{CF}). This has motivated in recent years a new interest in studying  the geometry and the structure of these spaces (\cite{P-R},\cite{LPP},\cite{HR1},\cite{AM}). Variable exponent Lebesgue spaces belong to the  general class of (non-rearrangement invariant) Musielak-Orlicz spaces (\cite{Musielak}).

This paper has two different parts. In the first part we collect, in a sort of survey, several recent  results on the lattice structure of variable exponent spaces. Our interest will be bounded to the existence of isomorphic copies and lattice isomorphic copies of sequence spaces $l_q$ ($1\leq q<\infty$) in spaces $\lpv$. Also, the existence of complemented copies  will be considered as well the subprojectivity  of these spaces 
(\cite{ HR1},\cite{HR2},\cite {R-S}). In general, the fact that $\lpv$ spaces are not rearrangement invariant  is a serious obstacle for obtaining bounded averaging projections. We remark that no extra conditions of regularity on the exponent functions, such as the local 
log-Hölder continuity condition (\cite{CF},\cite{Libro}),  will be required throughout the paper.

In the second part of the paper we study strictly singular and disjointly strictly singular operators between variable exponent spaces $ \lpv $. We obtain new results for the  disjoint strict singularity of the  inclusions \, $ L^{p(\cdot)}(\Omega)  \hookrightarrow  L^{q(\cdot)}(\Omega)$ by providing some  sufficient or necessary conditions (see \ref{Propo-1}, \ref{Prop-2} and \ref{lem-3}). Examples are also given showing that even in the case  \,$ess\inf (p-q) = 0$\,  the inclusion \, $ L^{p(\cdot)}(\Omega)  \hookrightarrow  L^{q(\cdot)}(\Omega)$ can be  disjointly strictly singular (see Example \ref{ejemp-2}).

Throughout the  paper, $(\Omega,\Sigma,\mu)$ is
a $\sigma$-finite  separable non-atomic measurable space and
$L_{0} (\Omega)$ is the space of all real measurable  function
classes. Given  a $\mu$-measurable function $p : \Omega
\rightarrow [1,\infty) $,
 the {\it Variable Exponent Lebesgue space }(or Nakano  space),  $\lpv$,
 is defined as the set of all measurable scalar function
classes $f \in L_0(\Omega)$  such that the  modular
$\rho_{p(\cdot)}(f/r)$ is finite for some $r>0$ ,  where
$$\rho_{p(\cdot)}(f)\,= \,\int _{\Omega} |f(t)|^{p(t)}\, d\mu(t) < \infty.$$
The associated Luxemburg norm is defined as
 $$ ||f||_{p(\cdot)} := \inf \{ r>0 ; \ \rho_{p(\cdot)}(f/r)\leq 1 \}
 $$

 With the usual pointwise order, $\big(\lpv , ||\,||_{p(\cdot)}\big)$
 is a Banach lattice.
We write
$ p^- := \text{ess} \inf
    \{ p(t): t\in \Omega\}$ and 
    $p^+ := \text{ess} \sup\{
    p(t): t\in
    \Omega \}.$
 Equally, \,$p_{|B}^+$\, and \,$p_{|B}^-$\, will denote the essential
    supremum and infimum of the function \,$p(\cdot)$\, over  a measurable
  subset $B$ of $\Omega$. The conjugate  function $p^*(\cdot)$  of  $p(\cdot)$ is defined by  the equation
    $\displaystyle \frac{1}{p(t)}+\frac{1}{p^*(t)}= 1$ almost everywhere. 
    
        As it should be expected, the properties of variable exponent spaces depend on  the measurable exponent functions $p(\cdot).$
     Thus,
the topological dual  of the space $\lpv$, for $p^+<\infty$,  is the
    variable  exponent space   \,$L^{p^*(\cdot)}(\Omega)$.
Also,  $\lpv$ is separable if and only if  $p^+< \infty$ or, equivalently, if and only if $\lpv$ contains no isomorphic copy of $l_{\infty}$.
In the sequel only separable exponent Lebesgue spaces $\lpv$ will be considered. In this case ($p^+< \infty$), notice  that $||f||_{p(\cdot)}=1$ if and only if $\rho_{p(\cdot)}(f)= 1$. Also, a sequence $(f_n) \subset \lpv$ satisfies $\lim_{n\rw \infty}||f_n||_{p(\cdot)}=0$  if and only if $\lim_{n\rw \infty}\rho_{p(\cdot)}(f_n)\,= 0$.  
The space $\lpv$ is reflexive if and only if  \,$1<p^{-}\leq
    p^{+}<\infty$ (this is also equivalent to   being uniformly convex). Also,
    $\lpv$  is a  \,${p^-}$-convex and \,$p^+$-concave Banach lattice.
    
    The  \textit{essential range} of the exponent function
$p(\cdot)$ is
    defined as
    $$ \esr := \{q\in[1,\infty) :\quad \forall \epsilon >0
    \quad \mu( p^{-1}(q-\epsilon,q+\epsilon))>0\}.
    $$
Note that the essential range is a closed subset of
$[1,\infty)$; in particular,  $\esr$  is  compact  when
$p(\cdot)$ is essentially bounded. Clearly, the values  \,$p^{-}$
and $p^{+}$ are always in the set \,$ \esr $.

We refer the reader to (\cite{CF},\cite{Libro}) for standard definitions and properties regarding variable Lebesgue spaces. Also, we refer  to \cite{AK}, \cite{LT1}, \cite{LT2}, \cite{LuxZa}, \cite{Me} and \cite{Za} for the standard  Banach lattice and Banach space  terminology. Particularly, a  Banach lattice $X$ will be said to have an isomorphic  \em copy \em of the Banach space $Y$ if there is  an isomorphism $\varphi:Z\rightarrow Y$ where $Z$  a subspace of $X$.  If additionally $Y$ is a Banach lattice, $Z$ is a sublattice of $X$ and $\varphi$  preserves the lattice structure (Riesz isomorphism), then we will say that $X$ has a \em lattice isomorphic copy \em of $Y$.

\section{$\ell_{q}$ -structure of variable exponent spaces}\label{estructura}

As one can expect, the structure of variable exponent spaces  is richer than that of  $L^p(\Omega)$ spaces.
In this section we will focus on the existence of  both isomorphic copies and lattice-isomorphic  copies of sequence spaces $l_q$ $(1\leq q <\infty)$  in spaces $\lpv$. Also, some results regarding projections and complementation will be given.

With respect to the existence of lattice isomorphic copies we remind the reader that in  a classical $L ^p(\Omega)$ space a lattice-isomorphic  copy of $l^q$ is well-known to exist only when $p=q$. For  Nakano spaces the situation is more involved.
Evidently,  if $\mu(p^{-1}(\{q\}))>0$ then $\lpv$ contains a lattice isomorphic copy of $L^{q}(\Omega)$ , and hence of  $l^q$. In the general case we need to look at the essential range  $R_{p(\cdot)}$ and proceed as follows.
For every $q\in \esr$, choose a sequence \,$(A_k)$\, of
disjoint measurable sets with $0<\mu(A_k)<\infty$ such that \,\,$A_{k} \subseteq
p^{-1}(\,[q+\frac{1}{k+1},q+\frac{1}{k})\, )$\,
 for every  \,$k$  (or  \,\,$A_{k} \subseteq
p^{-1}(\,[q-\frac{1}{k},q-\frac{1}{k+1})\, )$). Consider next
the sequence of Orlicz functions \,$(\psi_k)$\, defined as
        $$
\psi_{k}(s):= \frac{1}{\mu(A_k)} \int_{A_k} s^{p(t)}
d\mu(t)\,\,\;\quad \quad
        $$
for $s\in[0,1]$\, and  $ k\in\mathbb{N}$. Since \,\,$s^{q+\frac{1}{k}} \leq \psi_k(s) \leq s^{q+\frac{1}{k+1}} \leq
s^q$ for every \,$0\leq s \leq 1$,\, it turns out that, up to passing to some  subsequence,  the Musielak-Orlicz sequence spaces  \,$l_{(\psi_{n})}$  must coincide with $l_{q}$. Finally,  the unconditional basic sequence $(g_k) \subset \lpv$
 $$ g_k  := \frac{\chi_{A_k}(t)}{\mu(A_k)^{\frac{1}{p(t)}}}.
 $$ is shown to be equivalent to the canonical basis of $l_q.$ This was observed in 
 \begin{pro}[\cite{HR1}]
\label{Resultado-1}
For every  $q\in \esr$ the space $\lpv$ contains a
lattice-isomorphic  copy  of  $l_{q}$.
\end{pro}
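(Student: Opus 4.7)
The plan is to realize the desired lattice-isomorphic copy of $l_q$ as the closed linear span of a disjointly supported sequence of normalized indicator functions in $\lpv$. Being pairwise disjoint, such a sequence will automatically generate a sublattice of $\lpv$, so the only real content is to identify the associated Musielak--Orlicz sequence space with $l_q$.

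First, I would use the hypothesis $q\in \esr$ together with the $\sigma$-finiteness and non-atomicity of $(\Omega,\Sigma,\mu)$ to extract a pairwise disjoint sequence $(A_k)$ of measurable sets with $0<\mu(A_k)<\infty$ and $A_{k}\subseteq p^{-1}([q+\tfrac{1}{k+1},q+\tfrac{1}{k}))$ (or, if $q$ is approached only from below, the symmetric variant). The pointwise bound $q+\tfrac{1}{k+1}\le p(t)<q+\tfrac{1}{k}$ on $A_k$ immediately yields the inequalities $s^{q+1/k}\le \psi_{k}(s)\le s^{q+1/(k+1)}\le s^{q}$ for $s\in[0,1]$ claimed in the excerpt, where $\psi_{k}(s)=\frac{1}{\mu(A_k)}\int_{A_k}s^{p(t)}\,d\mu(t)$.

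Next I would set $g_k := \chi_{A_k}/\mu(A_k)^{1/p(\cdot)}$ and verify by direct integration that $\rho_{p(\cdot)}(g_k)=1$, whence $\|g_k\|_{p(\cdot)}=1$. The decisive identity is that for scalars $(a_k)$ with $|a_k|\le 1$,
$$\rho_{p(\cdot)}\Big(\sum_{k} a_k g_k\Big)=\sum_{k}\int_{A_k}\frac{|a_k|^{p(t)}}{\mu(A_k)}\,d\mu(t)=\sum_{k}\psi_{k}(|a_k|),$$
which exhibits the closed linear span of $(g_k)$ as isometric to the Musielak--Orlicz sequence space $l_{(\psi_{k})}$. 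Since the $g_k$ are disjointly supported the span is a sublattice, and standard Musielak--Orlicz sequence space theory then enters: the pointwise bounds above show that, after passing to a suitable subsequence, the norms $\|\cdot\|_{(\psi_{k})}$ and $\|\cdot\|_{l_{q}}$ are equivalent, producing the desired lattice isomorphism onto $l_q$.

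The main obstacle I expect is precisely this last equivalence. The inequalities $s^{q+1/k}\le\psi_{k}(s)\le s^{q}$ show that $\psi_k$ converges uniformly to $s^q$ on $[0,1]$, but identifying $l_{(\psi_{k})}$ with $l_{q}$ demands a uniform modular comparison valid for \emph{all} summable sequences, not only for sequences that are eventually small. Thinning $(A_k)$ so that the tails $\sum_{k\ge N}\psi_{k}(s)$ are uniformly dominated by $s^{q}$ times a convergent series is the standard device that overcomes this; once it is in place, the remainder of the argument reduces to routine bookkeeping with disjoint supports.
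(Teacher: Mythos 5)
Your proposal is correct and follows essentially the same route as the paper: the same disjoint sets $A_{k}\subseteq p^{-1}([q+\tfrac{1}{k+1},q+\tfrac{1}{k}))$, the same Orlicz functions $\psi_{k}$, the same normalized disjoint functions $g_{k}=\chi_{A_k}/\mu(A_k)^{1/p(\cdot)}$, and the same identification of $[g_k]$ with $l_{(\psi_k)}$ followed by a passage to a subsequence to obtain $l_q$. The tail-thinning issue you flag at the end is exactly the point the paper compresses into ``up to passing to some subsequence,'' so your account is, if anything, slightly more explicit.
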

On the other hand, if  \,$(g_{k})$\, is a sequence of disjoint normalized functions in
the separable space \,$\lpv$,\,by the density of simple functions in $\lpv,$ we can consider $(g_k)$ as disjoint simple functions
$$
g_k = \sum_{j=1}^{N_k} a_{k,j}\,\chi_{A_{k,j}}.\,
$$  Now  consider the  Orlicz function sequence
        \,$(\psi_k)$\,
        defined by
        $$
        \psi_k(s): =\frac{1}{\rho_{p(\cdot)}(g_k)}\sum_{j=1}^{N_k}\int_{A_{k,j}}s^{p(t)}|a_{k,j}|^{p(t)}d\mu(t),
        \quad
        $$
        for $ s\in[0,1]$. On can prove that  the basic sequence \,$(g_k)$ is equivalent to  the canonical
basis   of the  Musielak-Orlicz sequence space  $l_{(\psi_k)}$. Therefore $l_{(\psi_k)}$ and
the closed span $[g_k]$ are lattice isomorphic. Moreover, since
$$p^- \ \leq \  \frac{s\,\psi_n '(s)}{\psi_n(s)}\ \leq \ p^+,
$$ the following is derived:

\begin{thm}[\cite{HR1}]
\label{Resultado-2} Let $\lpv$\, be   separable. Then, $\lpv$
 has a lattice-isomorphic  copy of \,$l_q$\, if and only if \,$q\in \esr$.
\end{thm}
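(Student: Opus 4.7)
\emph{Proof plan.} The sufficiency is exactly Proposition \ref{Resultado-1}. For necessity, suppose $\lpv$ contains a Riesz-isomorphic copy of $l_q$; through the isomorphism, the canonical basis of $l_q$ maps to a disjoint sequence $(g_k)\subset\lpv$, which remains equivalent to the $l_q$-basis after normalization $\|g_k\|_{p(\cdot)}=1$. Density of simple functions (valid since $p^+<\infty$) and a standard disjoint perturbation let us assume that each $g_k$ is a disjoint simple function. The construction displayed immediately before the theorem then furnishes Orlicz functions $(\psi_k)$ with $[g_k]\cong l_{(\psi_k)}$ as Banach lattices and $p^- \le s\psi_k'(s)/\psi_k(s) \le p^+$ on $(0,1]$; in particular $l_{(\psi_k)}\cong l_q$.

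Arguing by contradiction, assume $q\notin\esr$. Pick $\epsilon>0$ with $\mu(p^{-1}((q-\epsilon,q+\epsilon)))=0$; then $\Omega$ splits (up to a null set) into disjoint bands $\Omega_1=\{p\le q-\epsilon\}$ and $\Omega_2=\{p\ge q+\epsilon\}$, yielding the band decomposition $\lpv = L^{p(\cdot)}(\Omega_1)\oplus L^{p(\cdot)}(\Omega_2)$ with the first summand $(q-\epsilon)$-concave and the second $(q+\epsilon)$-convex. Decompose $g_k=g_k^{(1)}+g_k^{(2)}$ along these bands, and split the defining integral of $\psi_k$ accordingly as $\psi_k=\psi_k^{(1)}+\psi_k^{(2)}$; set $\alpha_k:=\psi_k^{(1)}(1)=\rho_{p(\cdot)}(g_k^{(1)})$ and $\beta_k:=\psi_k^{(2)}(1)=1-\alpha_k$. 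Since $p(t)\le q-\epsilon$ on $\Omega_1$ and $s\in[0,1]$, the elementary inequality $s^{p(t)}\ge s^{q-\epsilon}$ yields the crucial estimate $\psi_k(s)\ge\alpha_k s^{q-\epsilon}$.

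After subsequencing one may assume $\alpha_k\to\alpha\in[0,1]$, and the argument splits into two cases. If $\alpha>0$, apply the lower bound to the test vectors $x_N=\sum_{k\le N}e_k$: from $\psi_k(1/r)\ge c(1/r)^{q-\epsilon}$ (with, say, $c=\alpha/2$) and the definition of the norm in $l_{(\psi_k)}$, the Luxemburg equation $\sum_{k\le N}\psi_k(1/r)=1$ forces $\|x_N\|_{l_{(\psi_k)}}\gtrsim N^{1/(q-\epsilon)}$, whereas $\|x_N\|_{l_q}=N^{1/q}$. Since $1/(q-\epsilon)>1/q$, letting $N\to\infty$ contradicts the norm equivalence $l_{(\psi_k)}\cong l_q$. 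If instead $\alpha=0$, then $\|g_k^{(1)}\|_{p(\cdot)}\le\alpha_k^{1/p^+}\to 0$, and a further subsequence can be chosen so that $\sum_k\|g_k^{(1)}\|_{p(\cdot)}$ is as small as required; the standard basic-sequence perturbation principle then gives that $(g_k^{(2)})$ is equivalent to $(g_k)$, hence to the $l_q$-basis. Since $\|g_k^{(2)}\|_{p(\cdot)}\to 1$, a normalization of $(g_k^{(2)})$ provides a lattice-isomorphic copy of $l_q$ inside the $(q+\epsilon)$-convex Banach lattice $L^{p(\cdot)}(\Omega_2)$; as sublattices inherit convexity, this contradicts the classical fact that $l_q$ is not $r$-convex for any $r>q$.

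The first case is a direct calculation with the Luxemburg norm and the pointwise inequality for $\psi_k$; the main delicate point is the second case, where the subsequence must be selected fast enough for the perturbation estimate to close, using the estimate $\|g_k^{(1)}\|_{p(\cdot)}\le\alpha_k^{1/p^+}$ derived from the $p^+$-power control of the modular.
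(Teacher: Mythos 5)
Your proposal is correct. The first half coincides with the paper's reduction: pass to disjoint simple functions, form the Orlicz functions $(\psi_k)$, and identify $[g_k]$ lattice-isomorphically with the Musielak--Orlicz sequence space $l_{(\psi_k)}$. Where you diverge is in how the necessity is then extracted. The paper (following \cite{HR1}) routes the conclusion through the general theory of Musielak--Orlicz sequence spaces, using the index bound $p^-\le s\psi_n'(s)/\psi_n(s)\le p^+$ and the known description of which $l_q$ embed as sublattices of $l_{(\psi_k)}$ in terms of limit functions of the $\psi_k$, which are then matched with the essential range. You instead run a self-contained contradiction: the spectral gap $\mu(p^{-1}((q-\epsilon,q+\epsilon)))=0$ splits $\Omega$ into the bands $\{p\le q-\epsilon\}$ and $\{p\ge q+\epsilon\}$, and the dichotomy on $\alpha=\lim\alpha_k$ is handled either by the modular lower bound $\psi_k(s)\ge\alpha_k s^{q-\epsilon}$ tested on $\sum_{k\le N}e_k$ (giving growth $N^{1/(q-\epsilon)}$ against $N^{1/q}$), or by perturbing onto the $(q+\epsilon)$-convex band and using that $l_q$ is not $r$-convex for $r>q$; both estimates are sound, including the modular-to-norm bound $\|g_k^{(1)}\|_{p(\cdot)}\le\alpha_k^{1/p^+}$. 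What your route buys is elementarity and transparency --- no appeal to the classification of lattice copies of $l_q$ in Musielak--Orlicz sequence spaces is needed, only $p^\pm$-convexity/concavity and the small-perturbation principle; what the paper's route buys is that the same machinery simultaneously yields the stronger statements (such as Theorem \ref{Resultado-3}) about general, not necessarily lattice, copies. One cosmetic point: in the case $\alpha>0$ you should note that $\Omega_1$ is automatically null unless $q-\epsilon\ge 1$, so the exponent $1/(q-\epsilon)$ is always well defined and exceeds $1/q$ there; this is implicit in your write-up but worth a line.
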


The existence of isomorphic copies of $l_q$ in $\lpv$ was also considered in \cite{HR1} where the  proof  involves  the generalized Kadec-Pe\l
czynski method (\cite{LT2}, Prop. 1.c.8)

\begin{thm}[\cite{HR1}] \label{Resultado-3}
 Let $\lpv$ be with \,$p^+ < \infty$.

(a) If \,\,$1 \leq p^- \leq 2$, then $\lpv$ contains an isomorphic  copy of $l_q$ if and only if \,$q \in \esr \cup
[p^- ,2]$\,.

(b) If \,\,$p^- > 2$, then $\lpv$ contains an isomorphic copy of  \,$l_q$\,  if and only if \,\,$q \in \esr \cup \{2\}$.

\end{thm}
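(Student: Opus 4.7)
The argument splits into the two implications.

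\emph{Sufficiency.} Values $q\in \esr$ are handled by Proposition~\ref{Resultado-1}, which yields even a lattice copy of $l_q$. For $q\notin\esr$ the copies produced will no longer be lattice copies. In part (a), given $q\in [p^-,2]\setminus \esr$, I would pick a measurable set $B\subset \Omega$ on which $p(\cdot)$ takes values arbitrarily close to $p^-$ (possible since $p^-\in\esr$) and transplant into $\lpv$ a standard $l_q$-construction from $L^{p^-}(B)$ based on symmetric $q$-stable random variables: as $p^-\leq q\leq 2$ these stables lie in every $L^r(B)$ with $r<q$ and are equivalent to the canonical basis of $l_q$, and a modular estimate comparing $\rho_{p(\cdot)}$ on $B$ with a classical $L^{p^-}$-modular transfers that equivalence to $\lpv$. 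For $q=2$ the same scheme with the Rademacher system in place of the stables produces an $l_2$-copy via a Khintchine-type inequality in $\lpv$, available since $p^+<\infty$; this simultaneously settles part (b) when $2\notin\esr$.

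\emph{Necessity.} Conversely, assume $l_q\hookrightarrow \lpv$ with $q\notin \esr$, and moreover $q\notin[p^-,2]$ in case (a), or $q\neq 2$ in case (b); I aim for a contradiction. The case $q=1$ is immediate: a copy of $l_1$ forces $\lpv$ to be non-reflexive, hence $p^-=1\in\esr$. For $q>1$, by a perturbation and the Bessaga-Pe\l czynski selection principle I may replace the embedding by a normalized weakly null sequence $(f_n)\subset \lpv$ equivalent to the canonical $l_q$-basis. The main tool is then the generalised Kadec-Pe\l czynski method (\cite{LT2}, Prop.~1.c.8), applied to the truncation $f_n=f_n\chi_{\{|f_n|\leq M\}}+f_n\chi_{\{|f_n|>M\}}$: up to a subsequence, either the tall parts $f_n\chi_{\{|f_n|>M\}}$ have norms bounded below for some $M>0$, or they vanish in norm uniformly as $M\to\infty$.

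In the first alternative a disjointification produces, up to a small perturbation, a \emph{disjoint} normalized sequence in $\lpv$ equivalent to $(f_n)$; by Theorem~\ref{Resultado-2} its closed span is isomorphic to $l_q$ only when $q\in \esr$, contradicting the hypothesis. In the second alternative $(f_n)$ admits a subsequence uniformly bounded in $L^\infty$; combining the $p^-$-convex and $p^+$-concave lattice structure of $\lpv$ with a Khintchine inequality one concludes that a further subsequence is equivalent to the $l_2$-basis, so $q=2$. This closes part (b). For part (a) the amplitude bound additionally places $[f_n]$ inside a $p^-$-convex sublattice, and $p^-$-convexity rules out $l_q$ for $q<p^-$; combined with the $l_2$-comparison this forces $q\in [p^-,2]$.

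\textbf{Main obstacle.} The delicate step is the equi-integrable branch of Kadec-Pe\l czynski: in the classical $L^p$-setting the passage from uniformly small tails to an $l_2$-subspace relies on rearrangement invariance, which $\lpv$ lacks. Replacing that argument by a combination of a variable-exponent Khintchine inequality with the $p^-$-convexity and $p^+$-concavity constants of $\lpv$ is the core technical ingredient, and is precisely where the distinction between (a) and (b) is produced.
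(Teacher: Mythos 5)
Your proposal follows exactly the route the paper indicates for this theorem: the paper gives no detailed argument, only the attribution to \cite{HR1} and the remark that the proof involves the generalized Kadec-Pe\l czynski method (\cite{LT2}, Prop.~1.c.8), which is precisely the dichotomy you build the necessity direction on, with the almost-disjoint branch reduced to Theorem~\ref{Resultado-2} and the equi-integrable branch handled by strong embeddability together with the type/cotype consequences of $p^-$-convexity and $p^+$-concavity. The sufficiency via $q$-stable variables and Rademacher-type systems supported where $p(\cdot)$ is close to $p^-$ is likewise the standard construction behind the cited result, so there is no substantive divergence.
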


We now turn to the existence of $complemented$ isomorphic
copies of \,$l_q$\, in a $\lpv$. A  duality argument
allows us to consider  only scalars   \, $q$\,  in  the essential
range  set \,$ \esr$. As seen above,
   a lattice-isomorphic  copy of \,$l_q$ in \,
 $\lpv$,\, for \, $q\in \esr$,\, can be
obtained by considering the span of a  suitable sequence of disjoint
normalized functions of the form
\,$\displaystyle
\left(\frac{\chi_{A_k}(t)}{\mu(A_k)^{\frac{1}{p(t)}}}\right)$.
In order to have this copy complemented,  we might consider the associated  {\it orthogonal
} projections, $T_A$, defined as

 $$ T_A(f)(t)\,= \,\sum_{k=1}^{\infty} \left(\int_{A_k}
    \,\frac{f(s)}{\mu(A_k)^{\frac{1}{p^*(s)}}}d\mu(s)\right)\,\frac{\chi_{A_k}(t)}{\mu(A_k)^{\frac{1}{p(t)}}} ,
\vspace{2mm}$$  where \,$\frac{1}{p(t)}+ \frac{1}{p^*(t)}=1$\,
almost everywhere. Unfortunately these projections need not  be  bounded in general:

\begin{ex}\label{Resultado-4} $($\cite{HR2}$)$ Let $\lpv$ be  separable. If the
 essential range  $\esr$ is not just one single point, then there exist
  sequences of disjoint measurable sets $A=(A_k)$  such that the associated orthogonal projection $T_A$ is  not
 bounded.
\end{ex} Indeed,  consider $q_1<q_2$   elements in $\esr$; take $0< \delta<\frac{q_2 - q_1}{4}$
and  two measurable subsets  $R$ and  $S$ of $\Omega$ verifying

    $$ \begin{array}{ccc}R & \subset & p^{-1}(q_2 -\delta, q_2 +
    \delta)\\
    S & \subset & p^{-1}(q_1 -\delta, q_1 +
    \delta) \\
\end{array}
$$
 with $\mu(R)=\mu(S)>0$. Since $\mu$ is  non-atomic, it
is possible to split
 $R$ and $S$ as an union of mutually disjoint measurable subsets $\bigcup_k R_k = R$  and $\bigcup_k S_k = S$ with
 $$\displaystyle
\mu(R_k)=
\left(\frac{\mu(R)}{\sum_{n=1}^{\infty}\frac{1}{n^{\alpha}}}\right)\frac{1}{k^{\alpha}}=\mu(S_k),$$
for certain suitable  $\alpha > 1$ . Set $A_k= R_k
\bigcup S_k$\, for  $k\in \mathbb{N}$. Then, it can be proved  that the operator $T_A$ is no bounded.
Moreover, the associated averaging projection $P_A,$ defined by
$$P_{A}(f)\,:=\,\sum_{k=1}^{\infty}\frac{ \int_{A_k}
    \,f(s)d\mu(s)}{\mu(A_k)}\,\,\chi_{A_k},
$$ is neither  bounded.

    The following notion of a $p(\cdot)$-regular sequence is useful:

   \begin{dfi} \label{Resultado-5}  Let $(A_k)$ be a sequence
of disjoint measurable sets and  $p : \Omega
\rightarrow [1,\infty)$\, be a measurable function with
$p^+<\infty$. The sequence  $(A_k)$ is said to be
$p(\cdot)$\textit{-regular} if the associated  Nakano
sequence
 spaces $l^{(\ma)}$ and $l^{(\mi)}$ coincide and there exists a constant $C>0$ such that for every natural $k$,
 $$ \frac{1}{C} < \mu(A_k)^{\ma-\mi} < C.
 $$
   \end{dfi}

We remark (\cite{HR2}) that for a  $p(\cdot)$-regular sequence $(A_k)$  the associated orthogonal projection  $T_{A}$ and the averaging projection $P_{A}$ are bounded.
\vspace{2mm}

Notice that if the exponent functions $p(\cdot)$ are defined  on a open set $\Omega$ in $\mathbb{R}^{n}$ and satisfy the log-Hölder continuous condition (\cite{CF},\cite{Libro} Def. 4.1), then every sequence of disjoint balls  $(A_k)$ satisfying $\sum \mu(A_k)< 1$ must be $p(\cdot)$-regular  and hence the associated projections $P_{A}$ and $T_{A}$ are bounded (\cite{HR2}).

We do not know any characterization for  an orthogonal operator $T_A$  (or an averaging projection $P_A$) to be bounded  in  $\lpv$ spaces.

   \begin{thm}\label{Resultado-6} $($\cite{HR1}$)$ Every variable exponent space \,$\lpv$\, has a lattice-isomorphic
complemented  copy  of $l_q$ for every   $q\in \esr$.
\end{thm}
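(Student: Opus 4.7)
The plan is to enhance the lattice-isomorphic copy of $l_q$ produced in Proposition \ref{Resultado-1} to a complemented one, by refining the choice of the disjoint sequence $(A_k)$ so that it becomes $p(\cdot)$-regular in the sense of Definition \ref{Resultado-5}. Once this is done, the remark following Definition \ref{Resultado-5} guarantees that the orthogonal projection $T_A$ is bounded on $\lpv$. A direct computation, exploiting the conjugate relation $\frac{1}{p(s)}+\frac{1}{p^*(s)}=1$ a.e.\ and the disjointness of the $A_k$, gives
\[
T_A(g_j)(t) \;=\; \left(\int_{A_j}\frac{d\mu(s)}{\mu(A_j)^{1/p(s)+1/p^*(s)}}\right)\frac{\chi_{A_j}(t)}{\mu(A_j)^{1/p(t)}} \;=\; g_j(t),
\]
so $T_A$ is a bounded projection of $\lpv$ onto the closed span $[g_k]$, which by Proposition \ref{Resultado-1} is lattice-isomorphic to $l_q$.

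For the construction, fix $q\in\esr$ and, as in Proposition \ref{Resultado-1}, pick disjoint measurable sets $A_k\subseteq p^{-1}\bigl([q+\tfrac{1}{k+1},q+\tfrac{1}{k})\bigr)$ (or the symmetric choice $A_k\subseteq p^{-1}\bigl((q-\tfrac{1}{k},q-\tfrac{1}{k+1}]\bigr)$), using only those indices $k$ for which the enclosing stratum has positive measure; there are infinitely many such $k$ because $q\in\esr$. The essential gap satisfies $\ma-\mi\leq\tfrac{1}{k(k+1)}=O(1/k^2)$. Using the non-atomicity of $\mu$, prescribe each $\mu(A_k)$ to lie within the available mass and with $|\log\mu(A_k)|$ of order at most $k^2$ --- for instance, bounded above and below by fixed positive constants whenever possible. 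This choice makes $\mu(A_k)^{\ma-\mi}\to 1$, yielding the measure condition $1/C<\mu(A_k)^{\ma-\mi}<C$ of Definition \ref{Resultado-5}; the same quadratic decay of $\ma-\mi$ also produces the coincidence $l^{(\ma)}=l^{(\mi)}$ of the associated Nakano sequence spaces. Hence $(A_k)$ is $p(\cdot)$-regular.

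The main obstacle is precisely this simultaneous verification of both regularity conditions, since Example \ref{Resultado-4} shows that arbitrary choices of $(A_k)$ --- in particular those for which the exponent gap $\ma-\mi$ does not collapse --- already lead to unbounded $T_A$. The crucial refinement over the generic Proposition \ref{Resultado-1} construction is that we nest each $A_k$ inside a single tight level set of $p(\cdot)$ around $q$, forcing the gap to zero at rate $O(1/k^2)$, and then tune the measures within a benign range permitted by the $\sigma$-finite non-atomic structure of $(\Omega,\Sigma,\mu)$. With $T_A$ shown to be bounded, $[g_k]$ becomes a lattice-isomorphic complemented copy of $l_q$ in $\lpv$, as required.
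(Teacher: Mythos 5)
Your strategy---refine the sets $(A_k)$ of Proposition \ref{Resultado-1} until they become $p(\cdot)$-regular, invoke the remark after Definition \ref{Resultado-5} to get $T_A$ bounded, and check $T_A(g_j)=g_j$---is exactly the route the paper indicates (``taking suitable $p(\cdot)$-regular measurable subset sequences''), and the computation $T_A(g_j)=g_j$ is correct.

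One step, however, does not work as written: the claim that one can always prescribe $|\log\mu(A_k)|=O(k^2)$ ``within the available mass''. The stratum $p^{-1}\bigl([q+\tfrac{1}{k+1},q+\tfrac{1}{k})\bigr)$ is only guaranteed to have \emph{positive} measure $m_k$; nothing prevents $m_k=e^{-e^k}$, in which case every admissible $A_k$ has $|\log\mu(A_k)|\geq e^k$ and the bound $\ma-\mi\leq\tfrac{1}{k(k+1)}$ no longer keeps $\mu(A_k)^{\ma-\mi}$ away from $0$. The repair is to shrink the \emph{oscillation} of $p$ on $A_k$ in proportion to $\log(1/m_k)$, not just to $1/k^2$: partition $[q+\tfrac{1}{k+1},q+\tfrac{1}{k})$ into $N_k\approx \tfrac{1}{k(k+1)\epsilon_k}$ subintervals of length $\epsilon_k$ and take $A_k$ inside a preimage carrying measure at least $m_k/N_k$ (trimmed to measure $\leq 1$ by non-atomicity). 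Since $\epsilon_k\log\bigl(m_k\epsilon_k k(k+1)\bigr)\to 0$ as $\epsilon_k\to 0$ for each fixed $k$, one can choose $\epsilon_k$ so small that $\mu(A_k)^{\ma-\mi}\geq\tfrac12$, while $\mu(A_k)\leq 1$ gives the upper bound and the (now even faster) decay of $\ma-\mi$ still forces $l^{(\ma)}=l^{(\mi)}$. With this adjustment---and after dispatching separately the degenerate case where $\mu(p^{-1}(\{q\}))>0$ and all open strata are null, in which one simply takes disjoint $A_k\subseteq p^{-1}(\{q\})$ so that regularity is trivial---your argument is sound.
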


A proof of the above result can be given taking  suitable  $p(\cdot)$-regular   measurable subset sequences.

 Recall that a Banach space $X$ is {\em subprojective} if every infinite-dimensional subspace of $X$ has some infinite-dimensional subspace which is complemented in $X$. Notice that this notion is an isomorphic property. It is well known that classical  $L^p(\Omega)$ spaces are subprojective if and only if $2 \le  p <\infty$ (\cite{Wh}).
Coherently with the  situation for $L^p(\Omega)$ spaces,  we have the  following extension:

\begin{thm} \label{Resultado-7} $($\cite{R-S}$)$
Let $L^{p(\cdot)}(\Omega)$\, be a separable variable exponent  space with $\mu(\Omega)<\infty$. If $p^-\ge2$, then \,$L^{p(\cdot)}(\Omega)$\,  is subprojective.
\end{thm}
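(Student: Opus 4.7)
Let $Y$ be an arbitrary infinite-dimensional closed subspace of $L^{p(\cdot)}(\Omega)$. Since $1<p^-\le p^+<\infty$ the space is reflexive, so $Y$ admits a normalized weakly null sequence $(y_n)$. My plan is to adapt the classical Whitley argument for $L^p(\Omega)$ with $p\ge 2$: I apply a Kadec--Pe\l czy\'nski-type dichotomy to the level sets of $(y_n)$ and treat the \emph{disjoint} and the \emph{strongly embedded} cases separately. After passing to a subsequence, either (i) there exist $\eta>0$ and pairwise disjoint measurable sets $(B_n)$ with $\rho_{p(\cdot)}(y_n\chi_{\Omega\setminus B_n})\to 0$, or (ii) the sequence $(y_n)$ is uniformly equi-integrable in $L^{p(\cdot)}(\Omega)$.

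In case (i), a gliding-hump and small-perturbation argument will produce a normalized disjoint sequence $(z_k)\subset L^{p(\cdot)}(\Omega)$ with $\|z_k-y_{n_k}\|_{p(\cdot)}\to 0$, so $[z_k]\simeq[y_{n_k}]\subset Y$. By Theorem~\ref{Resultado-2} and its proof, $(z_k)$ is lattice equivalent to the canonical basis of a Musielak--Orlicz sequence space $l_{(\psi_k)}$; the estimate $p^-\le s\psi_k'(s)/\psi_k(s)\le p^+$ combined with a Krivine-type extraction yields, after a further subsequence, equivalence to the basis of $l_q$ for some $q\in R_{p(\cdot)}\subseteq[p^-,p^+]\subseteq[2,\infty)$. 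Next I will subdivide the supports $A_k=\operatorname{supp}(z_k)$ into pieces on which $p(\cdot)$ oscillates by at most $1/k$ and rebalance their measures so that the resulting system becomes $p(\cdot)$-regular in the sense of Definition~\ref{Resultado-5}. Once this is achieved, both $P_A$ and $T_A$ of Example~\ref{Resultado-4} become bounded, and a lattice argument analogous to the proof of Theorem~\ref{Resultado-6}, using the dual functionals $g_k=|z_k|^{p(\cdot)-1}\operatorname{sgn}(z_k)\in L^{p^*(\cdot)}(\Omega)$, will produce a bounded projection of $L^{p(\cdot)}(\Omega)$ onto an infinite-dimensional sublattice of $[z_k]\subset Y$.

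In case (ii), the finiteness of $\mu(\Omega)$ together with $p^-\ge 2$ gives a continuous inclusion $L^{p(\cdot)}(\Omega)\hookrightarrow L^2(\Omega)$, so $(y_n)$ remains weakly null and $L^2$-bounded; after a further subsequence and a Bessaga--Pe\l czy\'nski-type perturbation it may be assumed $l_2$-equivalent in the $L^{p(\cdot)}$ norm and, simultaneously, $L^2$-orthonormal. The $L^2$-orthogonal projection
\[
Pf\,:=\,\sum_n \langle f,y_n\rangle_{L^2}\,y_n
\]
will then be bounded on $L^{p(\cdot)}(\Omega)$, because by $l_2$-equivalence and Bessel's inequality
\[
\|Pf\|_{p(\cdot)}\,\le\,C\Bigl(\sum_n|\langle f,y_n\rangle|^2\Bigr)^{1/2}\,\le\,C\|f\|_{L^2}\,\le\,C'\|f\|_{p(\cdot)},
\]
and $P$ fixes $[y_n]$ by orthonormality. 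The hard part is case (i): Example~\ref{Resultado-4} warns that arbitrary disjoint support systems need not yield bounded projections, so the refinement must be executed carefully, simultaneously controlling the oscillation of $p(\cdot)$ on each $A_k$ and the factor $\mu(A_k)^{p^+_{|A_k}-p^-_{|A_k}}$, in order to secure $p(\cdot)$-regularity.
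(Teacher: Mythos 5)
The paper only quotes Theorem~\ref{Resultado-7} from \cite{R-S} without reproducing its proof, so I am judging your argument on its own merits and against what Section~\ref{estructura} tells us. Your two-case skeleton (Kadec--Pe\l czy\'nski dichotomy; $L^2$-orthogonal projection in the equi-integrable case, which is where $p^-\ge 2$ and $\mu(\Omega)<\infty$ actually enter; reduction to an almost disjoint sequence otherwise) is the right one, and case (ii) is essentially sound modulo routine perturbation details. But case (i), which you yourself flag as ``the hard part'', is where the whole difficulty of the theorem lives, and your plan for it does not work as written. Subprojectivity requires the complemented subspace to sit \emph{inside} the given subspace $Y$. The disjoint functions $z_k$ are small perturbations of elements of $Y$, so the supports $A_k=\operatorname{supp}(z_k)$ are handed to you, not chosen. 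You cannot ``subdivide the supports and rebalance their measures'': replacing $z_k$ by $z_k\chi_{B_k}$ with $B_k\subset A_k$ yields functions that are no longer close to $Y$ unless $B_k$ carries almost all of the modular mass of $z_k$, and in that case you have no freedom left to prescribe $\mu(B_k)$; yet $p(\cdot)$-regularity (Definition~\ref{Resultado-5}) demands simultaneous control of the oscillation $p^+_{|B_k}-p^-_{|B_k}$ \emph{and} of $\mu(B_k)^{p^+_{|B_k}-p^-_{|B_k}}$, two requirements that pull in opposite directions as $B_k$ shrinks. Moreover, the boundedness of $T_A$ and $P_A$ under $p(\cdot)$-regularity is stated for projections built from normalized characteristic functions of the sets $A_k$; transferring it to the projection $f\mapsto\sum_k\langle g_k,f\rangle z_k$ with $g_k=|z_k|^{p(\cdot)-1}\operatorname{sgn}(z_k)$ for general disjoint $z_k$ is an extra step you have not supplied, and Example~\ref{Resultado-4} is precisely the warning that this projection can fail to be bounded, so some genuine selection principle is unavoidable rather than optional.

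Concretely, the missing ingredient is a proof that \emph{some subsequence} of an arbitrary normalized disjoint sequence in $L^{p(\cdot)}(\Omega)$ spans a subspace that contains an infinite-dimensional subspace complemented in the whole space. In \cite{R-S} this is achieved by identifying $[z_k]$ with a Musielak--Orlicz sequence space $\ell_{(\psi_k)}$ with indices in $[p^-,p^+]$, establishing subprojectivity at the level of these sequence spaces, and then carrying the complementation back to $L^{p(\cdot)}(\Omega)$ by selecting a subsequence along which the relevant parameters stabilize --- not by altering the supports. A smaller point: your Krivine-type extraction of an $\ell_q$-basis from $(\psi_k)$ needs the compactness argument behind Theorems~\ref{Resultado-2} and \ref{Resultado-3} (choosing $\psi_{k_j}(s)\to s^q$ at a summable rate), since an arbitrary subsequence of the canonical basis of $\ell_{(\psi_k)}$ need not be equivalent to any $\ell_q$-basis. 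As it stands, your argument proves the strongly embedded half and only outlines a program, with an unworkable step, for the disjoint half.
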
 In view of Theorems \ref{Resultado-2} and \ref{Resultado-3},  the  infinite-dimensional subspace of $\lpv$ which appears in the proof of the Theorem must be  an $l_q$-space with $q\in \esr$ or $q=2$.
In addition, by Theorem \ref{Resultado-3}, the condition $2\le p^-$ turns out to be also necessary (see   5.4 in \cite{R-S}).

Recall also that a Banach space $X$ is $superprojective$ if  every infinite-codimensional subspace of $X$ is contained in an infinite-codimensional subspace complemented in $X$. As a  consequence of the  above result and  the reflexivity,  we have that \,  $L^{p(\cdot)}(\Omega)$  is superprojective if and only if  $ 1 < p^{-} \leq p^{+} \leq 2 $.

An open problem is obtaining  conditions on the exponent function $p(\cdot)$ so that  $\lpv$  contains a subspace isomorphic to $L^{q}(\Omega)$ for $2<q<\infty$.  Of course, the trivial case   $\mu(p^{-1}(\{q\}))>0 $ can be excluded here.

\section{Strictly and disjointly strictly singular operators }\label{operadores}

In this section we study strictly singular and disjointly strictly singular operators between variable exponent spaces, presenting  new results for inclusion operators. 

Let X and Y be two Banach spaces. Recall that a bounded lineal  operator $T : X \rw Y$ is  \em strictly singular \em (or \em Kato\em)   if for every infinite-dimensional closed subspace $Z$ of $X,$ the restriction $T_{|Z}$ is not an isomorphism. If, additionally, $X$ is a Banach lattice the operator $T$ is said to be \textit{disjoint strictly singular }($DSS$)  if for every pairwise disjoint normalized  sequence $(f_n)\subset X,$ the restriction to the span  $T_{|[f_n]}$ is not an isomorphism. In general, we can also talk about an operator being
 \em $Z$-strictly singular \em if for every  closed subspace $M\subseteq X$ isomorphic to the Banach space $Z$,  the restriction $T_{|M}$ is not an isomorphism. For basic properties of these   classes of operators  we refer  to (\cite{AK},\cite{LT1}).

   In the case of  classical $L^{p}(\Omega) $ spaces, the following characterization is well known (\cite{W}): An operator $T: L^{p}(\Omega)\rightarrow L^{p}(\Omega)$ is strictly singular if and only if it is $\ell^{p}$-strictly singular and $\ell^{2}$-strictly singular.

  In order to  present extensions of this result in the context of  variable exponent spaces, let us recall that a subspace $X$ of a $\lpv$ is \em strongly embedded \em  if convergence in the $\lpv$-norm in $X$ is equivalent to the convergence in measure. It turns out  that a subspace $X$ is strongly embedded in $\lpv$ if and only if there exist $\delta>0$ and another variable exponent space $L^{q(\cdot)}$ with  $1\leq q(\cdot)+\delta \leq  p(\cdot)$ such that  the norms \, $\|\,\|_{p(\cdot)}$\,  and \,  $\|\,\|_{q(\cdot)} $
   are equivalent on $X$. In this context,  an operator $T:\lpv\rightarrow \lpv $ which is invertible on a strongly embedded subspace cannot be  $\ell^{2}$-strictly singular. Using this fact and the above characterization of complemented $l_q$-copies in  $\lpv$  we have the following:

\begin{thm} \label{Resultado-8} $($\cite{BH}$)$ Let $T : \lpv \rw \lpv $ be a bounded operator,  $\mu(\Omega ) <\infty$, and $  1 \leq  p^-\leq p^+ <\infty.$ The following statements are equivalent:

i)  $T$ is strictly singular.

 ii) $T$ is $l_q$-strictly singular for every $q \in \esr \cup \{2\}.$
 
iii)  $T$ is DSS and $l_2$-strictly singular.
\end{thm}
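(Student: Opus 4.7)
The plan is to prove the chain of implications $(i) \Rightarrow (ii) \Rightarrow (iii) \Rightarrow (i)$. The first implication is immediate from the definitions: strict singularity means failure of isomorphism on every infinite-dimensional closed subspace, in particular on every subspace isomorphic to $l_q$, so $(i)$ trivially yields $l_q$-strict singularity for every $q \in \esr \cup \{2\}$.

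For $(ii) \Rightarrow (iii)$, the case $q=2$ of the hypothesis gives $l_2$-strict singularity at once, so the work lies in establishing $DSS$. Given any disjoint normalized sequence $(f_n) \subset \lpv$, I would invoke the Musielak--Orlicz analysis preceding Theorem \ref{Resultado-2}: $[f_n]$ is lattice-isomorphic to $l_{(\psi_n)}$ for the explicit sequence $(\psi_n)$ associated with $(f_n)$, and the bounds $p^- \le s\psi_n'(s)/\psi_n(s) \le p^+$ hold. Using $p^+ < \infty$ and a pigeonhole/subsequence argument on the distribution of the exponent $p(\cdot)$ over the supports of the $f_n$, I would extract a subsequence $(f_{n_k})$ whose associated Orlicz functions $\psi_{n_k}$ concentrate around a common power $s^{q_0}$ with $q_0 \in \esr$, making $(f_{n_k})$ equivalent to the canonical basis of $l_{q_0}$. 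Since $T$ is $l_{q_0}$-strictly singular by hypothesis, $T$ fails to be an isomorphism on $[f_{n_k}]$ and hence on $[f_n]$, proving $DSS$.

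For $(iii) \Rightarrow (i)$, I would argue by contradiction using a Kadec--Pe\l czy\'nski dichotomy adapted to variable exponent spaces. Suppose $T$ is not strictly singular, so there is an infinite-dimensional subspace $Z \subset \lpv$ on which $T$ is an isomorphism. If $Z$ is not strongly embedded, a standard gliding-hump construction (combined with convergence in measure failing to control the norm) produces a sequence in $Z$ equivalent to a disjoint normalized sequence in $\lpv$, contradicting $DSS$ of $T$. If instead $Z$ is strongly embedded, then by the characterization recalled before the statement there exist $\delta>0$ and $q(\cdot)$ with $1 \le q(\cdot)+\delta \le p(\cdot)$ such that $\|\cdot\|_{p(\cdot)}$ and $\|\cdot\|_{q(\cdot)}$ are equivalent on $Z$; exploiting $p^+<\infty$, $\mu(\Omega)<\infty$ and Khintchine-type estimates one extracts an $l_2$-subspace of $Z$, contradicting $l_2$-strict singularity of $T$.

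The main obstacle I expect is the disjoint-sequence analysis in $(ii) \Rightarrow (iii)$: pinning down a limit exponent $q_0 \in \esr$ for an arbitrary disjoint normalized sequence requires careful uniform control of the Orlicz moduli $\psi_n$, not merely the two-sided squeeze by $p^\pm$. A secondary delicate point is the strongly-embedded case in $(iii) \Rightarrow (i)$, where the $l_2$-copy must be produced intrinsically from the equivalence of norms on $Z$, rather than from any global $2$-convexity of $\lpv$ itself.
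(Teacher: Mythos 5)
Your architecture $(i)\Rightarrow(ii)\Rightarrow(iii)\Rightarrow(i)$ is reasonable, and the paper itself only sketches this result (it is quoted from \cite{BH}), but two of your steps contain genuine gaps. In $(ii)\Rightarrow(iii)$ you claim that an arbitrary pairwise disjoint normalized sequence $(f_n)$ admits a subsequence whose Orlicz functions $\psi_{n_k}$ ``concentrate around a common power $s^{q_0}$'', so that $[f_{n_k}]$ is isomorphic to $l_{q_0}$ with $q_0\in \esr$. This step fails: each $\psi_n(s)$ is an average of powers $s^{p(t)}$ over the support of $f_n$, and such an average need not be equivalent at $0$ to any power function. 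For instance, if $p(\cdot)$ takes the value $1+\frac{1}{k}$ on sets $A^{(k)}$ of positive measure and each $f_n$ distributes modular mass $2^{-k}$ over a piece of each $A^{(k)}$, then $\psi_n(s)=\sum_k 2^{-k}s^{1+1/k}$ satisfies $\psi_n(s)/s\to 0$ and $\psi_n(s)/s^{1+\varepsilon}\to\infty$ for every $\varepsilon>0$, so no subsequence of $(f_n)$ is equivalent to any $l_q$-basis. The viable route is instead through disjoint \emph{block bases}: $[f_n]$ is lattice isomorphic to the Musielak--Orlicz sequence space $l_{(\psi_n)}$, which (using $p^+<\infty$) contains some $l_q$ spanned by a block basis of the $f_n$'s; those blocks are again pairwise disjoint functions in $\lpv$, so Theorem \ref{Resultado-2} forces $q\in\esr$, and $T$ remains invertible on that block subspace, contradicting $(ii)$.

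The second gap is in the strongly embedded case of $(iii)\Rightarrow(i)$: you propose to ``extract an $l_2$-subspace of $Z$'' from the equivalence of the $p(\cdot)$- and $q(\cdot)$-norms on $Z$. When $p^-<2$ this is impossible in general: already for $p\equiv 1$ (allowed by the hypotheses) the span of i.i.d.\ $q$-stable random variables, $1<q<2$, is a strongly embedded copy of $l_q$ in $L^1(\Omega)$, and $l_q$ with $q<2$ contains no copy of $l_2$. Hence the copy of $l_2$ on which $T$ must be invertible cannot always be located inside $Z$; the assertion the paper records before the theorem --- that an operator invertible on a strongly embedded subspace cannot be $l_2$-strictly singular --- is a genuinely global statement about the operator, in the spirit of Weis's theorem \cite{W} for $L^p$ and of the lower-estimate techniques of \cite{FHKT}, and is proved in \cite{BH}; it does not follow from a Khintchine-type extraction within $Z$. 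Your treatment of $(i)\Rightarrow(ii)$ and of the non-strongly-embedded case via the Kadec--Pe\l czy\'nski dichotomy is correct and consistent with the paper's sketch, but as written the two steps above are the heart of the theorem and both need repair.
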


Notice that the equivalence between  (i) and (iii) remains true for  a general Banach lattice $E$ satisfying a lower $2$-estimate (\cite{FHKT}).

We pass now to consider in further detail DSS   operators  on variable exponent  spaces. More precisely, we will focus on the disjoint strict singularity  of inclusions between different $\lpvo$ spaces looking for suitable criteria. In what follows, we will restrict ourselves to   $\Omega = [0,1]$ with the usual Lebesgue measure and bounded exponent (measurable) functions $ p, q  :  [0,1] \rw  [1,M] $ satisfying $q(x)\le p(x)\le p^+\le M<\infty$. Notice that, in this context, the inclusion
$ I : \lpvo \hookrightarrow \lqvo
$ is bounded.

We make the observation that the condition  $\esr \cap R_{q(\cdot)}=\emptyset$  implies that the inclusion  $I : \lpvo \hookrightarrow \lqvo $ is DSS. This condition on the essential ranges is evidently too strong and, in  particular,  implies that  $ess\inf (p-q) >0$,  which  turns out to be also a sufficient condition.
The proof is straigthforward and makes use of the following standard lemma:
 \begin{lem} \label{lem-1} 
 Let
 $(f_n)$ be a pairwise disjoint normalized sequence in  $\lpvo$ and   $(r_n)\uparrow \infty$  a sequence of scalars. Then,  up to passing to subsequence or equivalence, we can assume that $ |f_n(x)| > r_n$ for all $x\in \text{supp}f_n$.
\end{lem}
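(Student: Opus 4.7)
The plan is the standard truncation plus small--perturbation argument for disjoint sequences in a Banach lattice. I would split each $f_n$ into the piece on $\{|f_n|>r_n\}$ and the piece below this threshold, pass to a subsequence on which the ``low'' piece has negligible norm, and then invoke Bessaga--Pe\l{}czy\'nski's principle of small perturbations to conclude that the truncated sequence is equivalent to the original.

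Concretely, pairwise disjointness of the supports together with $\mu([0,1])=1$ forces $\sum_n \mu(\mathrm{supp}\,f_n)\le 1$, hence $\mu(\mathrm{supp}\,f_n)\to 0$. For each $k\ge 1$ I would choose $n_k>n_{k-1}$ such that
$$\max\{1,r_k^{p^+}\}\,\mu(\mathrm{supp}\,f_{n_k}) < 2^{-k},$$
which is possible because once $k$ is fixed the left--hand factor is a finite constant. Setting $A_k:=\{|f_{n_k}|>r_k\}$, $g_k:=f_{n_k}\chi_{A_k}$, and $h_k:=f_{n_k}-g_k$, the pointwise bound $|h_k|\le r_k$ on its support yields
$$\rho_{p(\cdot)}(h_k)\le\max\{1,r_k^{p^+}\}\,\mu(\mathrm{supp}\,f_{n_k}) < 2^{-k},$$
so that $\|h_k\|_{p(\cdot)}\to 0$ summably fast (recall $p^+<\infty$).

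Because $(f_{n_k})$ is a normalized pairwise disjoint sequence in the Banach lattice $\lpvo$, it is $1$-unconditional basic and Bessaga--Pe\l{}czy\'nski's principle of small perturbations applies: after further thinning to ensure $\sum_k\|h_k\|_{p(\cdot)}$ is small enough, $(g_k)$ is equivalent to $(f_{n_k})$. Since $|g_k(x)|>r_k$ for every $x\in\mathrm{supp}\,g_k$ by construction and the $g_k$ are pairwise disjoint, renaming $k$ as $n$ (and normalizing, noting $\|g_k\|_{p(\cdot)}\to 1$) produces the stated sequence.

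The only point requiring care is truncating at height $r_k$, indexed by the \emph{new} variable, rather than $r_{n_k}$: a rapidly growing $(r_n)$ could make $\max\{1,r_{n_k}^{p^+}\}\,\mu(\mathrm{supp}\,f_{n_k})$ stay bounded away from zero, whereas with $r_k$ fixed before $n_k$ is picked the estimate is always achievable. Since $r_k\le r_{n_k}$ by monotonicity, the renamed sequence still satisfies $|f_n|>r_n$ on $\mathrm{supp}\,f_n$. No genuine obstacle arises; this is the routine disjoint--sequence truncation technique familiar from Musielak--Orlicz spaces.
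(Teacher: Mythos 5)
Your proof is correct and follows essentially the same route as the paper: truncate each function below a threshold, bound the modular of the discarded piece by the threshold raised to $p^+$ times the measure of the support (which tends to $0$ by disjointness), and invoke the small perturbation principle. Your version is in fact slightly more complete, since you explicitly handle the growing thresholds $r_k$ by selecting $n_k$ after $r_k$ is fixed, a selection step the paper leaves implicit after truncating only at the level $r_1$.
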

     \begin{proof} Take  $A_n= \text{supp}f_n$ pairwise disjoint. Evidently,
     $ \sum \mu(A_n) \leq 1.
     $  Define
     $ B_{n,1}= \{x  :  |f_n(x)|> r_1\}$ and  $ C_{n,1}= [0,1]\backslash B_{n,1}$. 
      Notice that $\mu(B_{n,1})>0$ as $(f_n)$ is normalized. Let $g_n = f_n \chi_{B_{n,1}}$. Since $1\leq p^+ < \infty$, we have
     $$ \rho_{p(\cdot)} (f_n-g_n)= \int_{C_{n,1}}|f(x)|^{p(x)}d\mu(x) \leq \mu(C_{n,1})r_1^{p^+} \rw 0,
     $$ or equivalently
     $ ||f_n - g_n||_{p(\cdot)}\rw 0$. From here, a standard perturbation argument (\cite{LT1} Prop. 1.a.9) concludes that some subsequence $(f_{n_k})$ can be assumed to meet our claim.
     \end{proof}
     
     \begin{pro} \label{Propo-1} If  $ess\inf (p-q) >0$, then the inclusion  $I : \lpvo \hookrightarrow \lqvo $ is DSS.
     \end{pro}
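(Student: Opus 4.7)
The plan is to show that any normalized pairwise disjoint sequence $(f_n)\subset \lpvo$ has a subsequence whose $\lqvo$-norm tends to zero; this forces the restriction of $I$ to $[f_n]$ not to be bounded below, and hence $I$ is DSS.

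My first step is to arrange that each $f_n$ is large on its support. I would apply Lemma \ref{lem-1} with any scalar sequence $r_n\uparrow\infty$ (for instance $r_n=n$) to obtain, after passing to an equivalent subsequence, the pointwise lower bound $|f_n(x)|>r_n$ on $A_n:=\text{supp}\,f_n$. Since $p^+\leq M<\infty$ and $(f_n)$ is normalized in $\lpvo$, we also have $\rho_{p(\cdot)}(f_n)=1$.

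The core computation is then a single modular estimate. Setting $\delta:=\text{ess}\inf(p-q)>0$, we have $q(x)-p(x)\leq -\delta$ almost everywhere; and for $n$ large enough that $r_n\geq 1$, on each $A_n$ the inequality
\[
|f_n(x)|^{q(x)} \,=\, |f_n(x)|^{p(x)}\,|f_n(x)|^{q(x)-p(x)} \,\leq\, r_n^{-\delta}\,|f_n(x)|^{p(x)}
\]
holds pointwise, since $|f_n(x)|\geq r_n\geq 1$ is raised to a nonpositive power. Integrating gives $\rho_{q(\cdot)}(f_n)\leq r_n^{-\delta}\rw 0$, and because $q^+<\infty$ this is equivalent to $\|f_n\|_{q(\cdot)}\rw 0$, completing the argument.

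I do not foresee any real obstacle: the only point that requires some care is the direction of the pointwise inequality when $q(x)-p(x)$ is negative, which is precisely the regime $|f_n(x)|\geq 1$ that Lemma \ref{lem-1} is designed to supply. This is consistent with the excerpt's remark that the proof is straightforward.
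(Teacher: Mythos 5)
Your argument is correct and coincides with the paper's own proof: both invoke Lemma \ref{lem-1} to get $|f_n|>n$ on the support, then use the factorization $|f_n|^{q(x)}=|f_n|^{p(x)}|f_n|^{q(x)-p(x)}\le n^{-\delta}|f_n|^{p(x)}$ with $\delta=\mathrm{ess}\inf(p-q)$ to conclude $\rho_{q(\cdot)}(f_n)\to 0$ and hence $\|f_n\|_{q(\cdot)}\to 0$. No gaps; this is essentially the same proof.
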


     \begin{proof} Let $(f_n)\subset \lpvo$ be a pairwise disjoint normalized sequence.
     Assume, as in  Lemma \ref{lem-1}, that $|f_n|> n,$   and \,$||f_n||_{p(\cdot)} = 1$ . Choosing $0< \delta< ess\inf (p-q)$, we have
     \begin{align*}
     \rho_{q(\cdot)}(f_n)&= \int_{A_n} |f_n(x)|^{q(x)} dx=\int_{A_n} |f_n(x)|^{p(x)} |f_n(x)|^{q(x) - p(x)}dx\\
     &=
    \int_{A_n} |f_n(x)|^{p(x)} (\frac{1}{|f_n(x)|})^{p(x) - q(x)}dx \leq (\frac{1}{n})^{\delta}\int_{A_n} |f_n(x)|^{p(x)} dx\\
    &=(\frac{1}{n})^{\delta}\rw 0,
     \end{align*}

     and thus $||f_n||_{q(\cdot)}\rw 0.$ Therefore,  $I_{|_{[f_n]}}$  cannot be  an isomorphism.
     \end{proof}
The following result shows that condition $ess\inf (p-q) >0$ above can be replaced by  a weaker assumption:

\begin{pro} \label{Prop-2} Let $p(\cdot)$ and $q(\cdot)$ be bounded exponent functions  with $q(\cdot)\le p(\cdot)\le p^+<\infty$ and such that $p-q= f(\cdot)$ is decreasing. Assume that there is an increasing sequence $(x_n) \uparrow 1$ such that
$$ \sum_n (x_{n+1} -x_n)^{\frac{f(x_{n+1})}{p^+}} <\infty.
$$ Then, the inclusion $I : \lpvo \hookrightarrow \lqvo $ is DSS.
\end{pro}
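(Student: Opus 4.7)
The plan is to mirror the scheme of Proposition \ref{Propo-1}, replacing the uniform lower bound $p-q\ge\delta$ by a Hölder estimate tailored to each piece $[x_k, x_{k+1})$, so that the summability hypothesis becomes exactly what is needed to close the argument.

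First, I would take a pairwise disjoint normalized sequence $(f_n)\subset\lpvo$, set $A_n = \text{supp}\,f_n$ (so $\sum\mu(A_n)\le 1$ and hence $\mu(A_n)\to 0$), and apply Lemma \ref{lem-1} to assume $|f_n(x)|>r_n\ge 1$ on $A_n$ with $r_n\uparrow\infty$. Write $\beta_k := f(x_{k+1})$; the convergence of $\sum_k (x_{k+1}-x_k)^{\beta_k/p^+}$ forces $\beta_k>0$ for every $k$ (otherwise, since $f$ is decreasing, infinitely many terms would equal $1$). Because $f$ is decreasing, $f(x)\ge\beta_k$ throughout $[x_k,x_{k+1})$.

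The core estimate is the following. For $B := A_n\cap [x_k, x_{k+1})$, I would apply the classical Hölder inequality with the constant conjugate exponents
$$s \;=\; \frac{p^+}{p^+ - \beta_k}, \qquad s' \;=\; \frac{p^+}{\beta_k},$$
splitting $|f_n|^{q(x)} = |f_n|^{p(x)/s}\cdot |f_n|^{q(x)-p(x)/s}$. The $s$-th power of the first factor is $|f_n|^{p(x)}$, whose integral over $B$ is at most $\rho_{p(\cdot)}(f_n)=1$. A direct computation shows the $s'$-th power of the second factor equals $|f_n|^{p(x)-s' f(x)}$; on $B$ the exponent is non-positive because $s' f(x) \ge s'\beta_k = p^+ \ge p(x)$, so this quantity is bounded by $1$ since $|f_n|\ge 1$. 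Hölder then yields
$$\int_B |f_n(x)|^{q(x)}\,dx \;\le\; |B|^{1/s'} \;=\; |B|^{\beta_k/p^+}.$$

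Finally, summing over $k$ and using $|B|\le \min(\mu(A_n), x_{k+1}-x_k)$ gives
$$\rho_{q(\cdot)}(f_n) \;\le\; \sum_{k} \min\bigl(\mu(A_n),\, x_{k+1}-x_k\bigr)^{\beta_k/p^+}.$$
Given $\varepsilon>0$, the hypothesis lets me pick $N$ with $\sum_{k\ge N}(x_{k+1}-x_k)^{\beta_k/p^+}<\varepsilon/2$, and since the head $\sum_{k<N}\mu(A_n)^{\beta_k/p^+}$ is a finite sum of terms tending to $0$ (all $\beta_k/p^+>0$), it is $<\varepsilon/2$ for $n$ large. Hence $\rho_{q(\cdot)}(f_n)\to 0$, so $\|f_n\|_{q(\cdot)}\to 0$ and $I_{|[f_n]}$ cannot be an isomorphism. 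The main delicate point is the selection of the Hölder exponents: one needs the ``bad'' factor $|f_n|^{p(x)-s' f(x)}$ to be $\le 1$ \emph{and} the surviving exponent $1/s'$ of $|B|$ to match exactly the $\beta_k/p^+$ of the summability hypothesis; both requirements are reconciled precisely by the monotonicity of $f$, which provides the uniform lower bound $f(x)\ge\beta_k$ on each piece $[x_k,x_{k+1})$.
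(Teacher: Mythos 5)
Your argument is correct and reaches the same per-block bound $(x_{k+1}-x_k)^{f(x_{k+1})/p^+}$ as the paper, but by a genuinely different mechanism. The paper splits each block $A\cap[x_k,x_{k+1})$ into the sets where $|f_n(t)|\le (x_{k+1}-x_k)^{-1/p(t)}$ and where it exceeds that threshold, estimating the two pieces separately, and disposes of the initial interval $[0,x_{n_0})$ by invoking Proposition \ref{Propo-1} (since $f\ge f(x_{n_0})>0$ there). You replace the two-case splitting by a single application of H\"older's inequality with the constant conjugate exponents $s=p^+/(p^+-\beta_k)$, $s'=p^+/\beta_k$; the computation $v^{s'}=|f_n|^{p(x)-s'f(x)}$ is right, the inequality $s'f(x)\ge s'\beta_k=p^+\ge p(x)$ on $[x_k,x_{k+1})$ (monotonicity of $f$) makes that factor $\le 1$ once $|f_n|\ge 1$, and $\int_B|f_n|^{p}\le\rho_{p(\cdot)}(f_n)=1$ handles the other factor; one should also note $\beta_k<p^+$ (which holds since $q\ge 1$) so that $s>1$. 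Your treatment of the head is also different and slightly slicker: instead of Proposition \ref{Propo-1} you use $|B|\le\mu(A_n)\to 0$ together with $\beta_k>0$ for the finitely many initial blocks. The net effect is a shorter, more self-contained proof of the same estimate.

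One small omission: your decomposition $\bigcup_k[x_k,x_{k+1})$ only covers $[x_1,1)$, and nothing in the hypothesis forces $x_1=0$, so the piece $A_n\cap[0,x_1)$ is unaccounted for. The fix is immediate and uses nothing new: since $f$ is decreasing, $f(x)\ge f(x_1)\ge\beta_1>0$ on $[0,x_1)$, so the same H\"older estimate with exponent $p^+/f(x_1)$ gives
\begin{equation*}
\int_{A_n\cap[0,x_1)}|f_n(x)|^{q(x)}\,dx\;\le\;\mu(A_n)^{f(x_1)/p^+}\;\longrightarrow\;0,
\end{equation*}
which is exactly how you already treat the head terms. With that sentence added, the proof is complete.
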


We remark that that the  assumption in Proposition \ref{Prop-2} is clearly implied by condition $ess\inf (p-q)>0$  and yet compatible with $R_{p(\cdot)}\cap R_{q(\cdot)}\neq\emptyset$. 
This is shown with the following example:

\begin{ex} \label{ejemp-2} Consider a bounded exponent function $q(\cdot)$ and define \, $p(\cdot) = q(\cdot) + f(\cdot)$, where $f$ is the decreasing function
 $$f =\chi_{\left[0,\frac{1}{2}\right)} + \sum_{n\geq 2} \,\frac{1}{\ln n} \,\, \chi_{\left[1-(\frac{1}{n})^{n-1},1-(\frac{1}{n+1})^{n}\right)}$$ \,
Then, $ess\inf f(x)=0$ and the sequence  \,$x_n=1-(\frac{1}{n})^{n-1}\nearrow 1$ \, satisfies
  $$\sum_n (x_{n+1} -x_n)^{\frac{f(x_{n+1})}{1+q^{+}}} \,< \,\sum_n (\frac{1}{n})^{\frac{n-1}{(1+q^{+})\ln (n+1)}} <\infty
$$
\end{ex}

\begin{proof}[Proof of Prop. \ref{Prop-2}] Let $(f_k) \subset \lpvo$ be a normalized pairwise disjoint sequence and $\epsilon > 0.$  By the assumption, there is $n_0$ such that
$$ \sum_{n\geq n_0} \,(x_{n+1} -x_n)^{\frac{f(x_{n+1})}{p^+}} \leq \frac{\epsilon}{3}. \qquad (*)
$$

With the following notation:
$$ A_k= supp f_k,\quad A_{k,0} = A_k \bigcap [0,x_{n_0}),\quad
 A_{k,n} = A_k \bigcap [x_n, x_{n+1}), \  n\geq n_0;
$$
$$ B_{k,n}=\{t\in A_{k,n}\,:\, |f_k(t)| \leq \frac{1}{(x_{n+1}-x_n)^{\frac{1}{p(t)}}}\, \} \quad  n\geq n_0;
$$
$$ C_{k,n}=\{t\in A_{k,n}\,:\, |f_k(t)| > \frac{1}{(x_{n+1}-x_n)^{\frac{1}{p(t)}}}\, \} \quad n\geq n_0;
$$
 we can write

$$ \rho_{q(\cdot)}(f_k) = \int_{A_{k,0}} |f_k(t)|^{q(t)}dt + \sum_{n\geq n_0}\int_{B_{k,n}} |f_k(t)|^{q(t)}dt + \sum_{n\geq n_0}\int_{C_{k,n}} |f_k(t)|^{q(t)}dt.
$$

Since $f(t) \geq f(x_{n_0})>0$ for all $t<x_{n_0}$, it follows from Proposition \ref{Propo-1} 
that, up to some subsequence, there exists  $k_0$ such that
$$\rho_{q(\cdot)} (f_k\chi_{A_{k,0}}) = \int_{A_{k,0}} |f_k(t)|^{q(t)}dt <\frac{\epsilon}{3}$$
for all $k\geq k_0$.
On the other side, since $f=p-q$ is decreasing, we have
\begin{align*} \sum_{n\geq n_0}\int_{B_{k,n}} |f_k(t)|^{q(t)}dt &\leq \sum_{n\geq n_0} \int_{x_n}^{x_{n+1}} \frac{1}{(x_{n+1}-x_n)^{\frac{q(t)}{p(t)}}} dt =\sum_{n\geq n_0} \frac{1}{x_{n+1}-x_n} \int_{x_n}^{x_{n+1}} \frac{1}{(x_{n+1}-x_n)^{\frac{q(t)}{p(t)}-1}} dt \\
 &\leq \sum_{n\geq n_0} \frac{1}{x_{n+1}-x_n} \int_{x_n}^{x_{n+1}} (x_{n+1}-x_n)^{\frac{f(x_{n+1})}{p^+}} dt = \sum_{n\geq n_0}  (x_{n+1}-x_n)^{\frac{f(x_{n+1})}{p^+}} \leq \frac{\epsilon}{3},
\end{align*}
 where  $(*)$ has been used. Evidently, $(*)$ also implies  $(x_{n+1} -x_n)^{\frac{f(x_{n+1})}{p^+}} \leq \frac{\epsilon}{3}$. Since  $f$ is decreasing and $(f_k)$ is normalized,  we have
 \begin{align*}
 \sum_{n\geq n_0}\int_{C_{k,n}} |f_k(t)|^{q(t)}dt &\leq \sum_{n\geq n_0} \int_{C_{k,n}} |f_k(t)|^{p(t)}|f_k(t)|^{q(t)-p(t)}dt \\
 &\leq
\sum_{n\geq n_0} \int_{x_n}^{x_{n+1}} |f_k(t)|^{p(t)} (x_{n+1}-x_n)^{\frac{p(t)-q(t)}{p(t)}} dt \\
&\leq
  \sum_{n\geq n_0} \int_{x_n}^{x_{n+1}} |f_k(t)|^{p(t)} (x_{n+1}-x_n)^{\frac{f(x_{n+1})}{p^+}} dt \\
  &   \leq \sum_{n\geq n_0} \int_{x_n}^{x_{n+1}} |f_k(t)|^{p(t)} \frac{\epsilon}{3} dt \leq \frac{\epsilon}{3}.
\end{align*}

Thus, $\rho_{q(\cdot)}(f_{k}) \leq \epsilon$ for every  $ k \geq k_{0}$
\end{proof}

Considering for instance $ q(\cdot)=\chi_{[0,1]}$ and \, $p(\cdot) = q(\cdot) + f(\cdot)$, with  $f(\cdot)$ as above, it is evident that $ess\inf (p-q) = 0$ and yet $ I:  L^{p(\cdot)}[0,1] \hookrightarrow L^{1}[0,1]$ is DSS.

Reasoning in a similar way, we have the following:

\begin{pro} Let $p(\cdot)$ and $q(\cdot)$ be bounded exponent functions  with $q(\cdot)\le  p(\cdot)\le p^+<\infty$. Assume that there exists a sequence $(r_k)\searrow 0$ such that the measurable sets $R_k:= (p-q)^{-1}(\,(r_{k+1},r_k]\,)$ satisfy
$$  \sum_k  \mu(R_k) ^{\frac{(p-q)_{|R_k}^-}{p^+}}< \infty.
$$ Then
the inclusion
$  I : \lpvo \hookrightarrow \lqvo
$ is DSS.
\end{pro}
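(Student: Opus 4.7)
The plan is to follow closely the strategy of Proposition \ref{Prop-2}, now replacing the intervals $[x_n,x_{n+1})$ by the level sets $R_k$ of $p-q$. Let $(f_n)\subset\lpvo$ be a normalized pairwise disjoint sequence; by Lemma \ref{lem-1}, after passing to a subsequence one may assume $|f_n(t)|>n$ on $A_n:=\text{supp}(f_n)$. The aim is to prove $\rho_{q(\cdot)}(f_n)\to 0$, which forces $\|f_n\|_{q(\cdot)}\to 0$ and so prevents $I_{|[f_n]}$ from being an isomorphism.

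Fix $\epsilon>0$. The convergence of $\sum_k\mu(R_k)^{(p-q)^-_{|R_k}/p^+}$ allows a choice of $k_0$ with
$$\sum_{k\ge k_0}\mu(R_k)^{(p-q)^-_{|R_k}/p^+}<\tfrac{\epsilon}{3}\quad\text{and}\quad\mu(R_k)^{(p-q)^-_{|R_k}/p^+}<\tfrac{\epsilon}{3}\ \text{for every}\ k\ge k_0.$$
Decompose $A_n$ as $A_n\cap\bigcup_{k<k_0}R_k$ together with the pieces $A_n\cap R_k$, $k\ge k_0$, and within each $A_n\cap R_k$ ($k\ge k_0$) set
$$B_{n,k}=\{t\in A_n\cap R_k:|f_n(t)|\le\mu(R_k)^{-1/p(t)}\},\qquad C_{n,k}=(A_n\cap R_k)\setminus B_{n,k}.$$
On $\bigcup_{k<k_0}R_k$ one has $p-q>r_{k_0}$, so the argument of Proposition \ref{Propo-1} (using $|f_n|>n$) bounds that first piece of the modular by $n^{-r_{k_0}}$, which is below $\epsilon/3$ once $n$ is large. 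For the $B$-part, from $|f_n|^{q(t)}\le\mu(R_k)^{-q(t)/p(t)}=\mu(R_k)^{(p(t)-q(t))/p(t)}\mu(R_k)^{-1}$, combined with $\mu(R_k)\le 1$ and $(p(t)-q(t))/p(t)\ge (p-q)^-_{|R_k}/p^+$, one obtains
$$\int_{B_{n,k}}|f_n|^{q(t)}\,dt\le\mu(R_k)^{(p-q)^-_{|R_k}/p^+},$$
which sums (over $k\ge k_0$) to less than $\epsilon/3$. On $C_{n,k}$, the defining inequality gives $|f_n|^{q(t)-p(t)}\le\mu(R_k)^{(p(t)-q(t))/p(t)}\le\mu(R_k)^{(p-q)^-_{|R_k}/p^+}<\epsilon/3$, so factoring this constant,
$$\sum_{k\ge k_0}\int_{C_{n,k}}|f_n|^{q(t)}\,dt\le\frac{\epsilon}{3}\sum_{k\ge k_0}\int_{C_{n,k}}|f_n|^{p(t)}\,dt\le\frac{\epsilon}{3}\rho_{p(\cdot)}(f_n)=\frac{\epsilon}{3}.$$
Adding the three estimates gives $\rho_{q(\cdot)}(f_n)<\epsilon$ for $n$ sufficiently large, which is the desired conclusion.

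The principal technical point to watch is the direction of the inequalities when manipulating powers of $\mu(R_k)\le 1$: since $x\mapsto\mu(R_k)^x$ is non-increasing on $[0,\infty)$, the true exponent $(p(t)-q(t))/p(t)$ must always be replaced by its lower bound $(p-q)^-_{|R_k}/p^+$ in order to obtain a valid upper estimate, and the role of $p^+$ in the denominator comes exactly from the inequality $1/p(t)\ge 1/p^+$. This is the same bookkeeping that drives Proposition \ref{Prop-2}, so no new obstacle is expected beyond it; the argument is essentially a repackaging in which $\mu(R_k)$ plays the role $(x_{n+1}-x_n)$ played in the earlier proof.
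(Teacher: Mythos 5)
Your argument is correct and is exactly the adaptation of the proof of Proposition \ref{Prop-2} that the paper intends when it says the result follows by ``reasoning in a similar way'': the head $\bigcup_{k<k_0}R_k$ plays the role of $[0,x_{n_0})$ (handled via the Proposition \ref{Propo-1} estimate), and your $B$/$C$ splitting with threshold $\mu(R_k)^{-1/p(t)}$ mirrors the $B_{k,n}$/$C_{k,n}$ splitting with threshold $(x_{n+1}-x_n)^{-1/p(t)}$, with the exponent bookkeeping $\mu(R_k)^{(p(t)-q(t))/p(t)}\le\mu(R_k)^{(p-q)^-_{|R_k}/p^+}$ carried out correctly. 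The only point worth flagging is that, like the statement itself, your decomposition of $\mathrm{supp}(f_n)$ tacitly assumes that the sets $R_k$, together with $\{p-q>r_1\}$ (which your head estimate also covers), exhaust $[0,1]$ up to a null set, i.e.\ $\mu(\{p=q\})=0$; without this the proposition is false, since $I$ restricted to functions supported in $\{p=q\}$ is an isometry.
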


When it comes to obtaining necessary conditions we have the following result:

 \begin{pro} \label{lem-3} Let $p(\cdot)$ and $q(\cdot)$ be bounded exponent functions  with $q(\cdot)\le p(\cdot)\le p^+<\infty$   such that $p-q$ is decreasing. If
 the inclusion \,$I : \lpvo \hookrightarrow \lqvo $\,  is DSS,  then
     $$ \lim_{x\rw 1} (1-x)^{(p-q)(x)} =   0.$$

     \end{pro}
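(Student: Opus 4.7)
The strategy is to prove the contrapositive: assume $\lim_{x\to 1}(1-x)^{(p-q)(x)}\ne 0$ and produce a pairwise disjoint normalized sequence $(g_k)\subset\lpvo$ on which $I$ restricts to an isomorphism, contradicting DSS. Since $(1-x)^{(p-q)(x)}\in[0,1]$, the hypothesis yields a constant $c>0$ and a sequence $x_n\nearrow 1$ with $(1-x_n)^{(p-q)(x_n)}\ge c$ for all $n$. Passing to a sparse subsequence $y_k:=x_{n_k}$ with $y_{k+1}\ge (1+y_k)/2$ (possible since $x_n\to 1$), set $A_k:=[y_k,y_{k+1})$ and $\delta_k:=\mu(A_k)$. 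Then the $A_k$ are pairwise disjoint, $\delta_k\ge (1-y_k)/2$, and using $(p-q)(y_k)\le p^+$,
$$\delta_k^{(p-q)(y_k)}\ \ge\ \frac{(1-y_k)^{(p-q)(y_k)}}{2^{p^+}}\ \ge\ \frac{c}{2^{p^+}}\ =:\ M.$$
Consider finally the canonical $\lpvo$-normalized disjoint functions $g_k(t):=\chi_{A_k}(t)/\mu(A_k)^{1/p(t)}$.

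The decisive step is a uniform modular comparison. Write $f:=p-q\ge 0$. Monotonicity of $f$ yields $f(t)\le f(y_k)$ on $A_k$; combined with $p(t)\ge 1$ and $\mu(A_k)\le 1$, this gives pointwise $\mu(A_k)^{f(t)/p(t)}\ge \mu(A_k)^{f(y_k)}\ge M$. Together with $s^{q(t)}\ge s^{p(t)}$ for $s\in[0,1]$, a direct integration produces
$$\rho_{q(\cdot)}(sg_k)\ \ge\ M\,\rho_{p(\cdot)}(sg_k) \qquad\text{for every }k\in\mathbb{N}\text{ and every }s\in[0,1].$$
Now take $h=\sum a_kg_k$ with $\|h\|_{p(\cdot)}=1$. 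Since $p^+<\infty$, the norm--modular identity together with disjointness gives $\rho_{p(\cdot)}(h)=\sum\rho_{p(\cdot)}(a_kg_k)=1$, forcing $|a_k|\le 1$ for every $k$. Summing the modular inequality over $k$ yields $\rho_{q(\cdot)}(h)\ge M\rho_{p(\cdot)}(h)=M$. Using the standard bound $\rho_{q(\cdot)}(h)\le\|h\|_{q(\cdot)}^{q^-}$ (valid when $\|h\|_{q(\cdot)}\le 1$, the opposite case being trivial since $M\le 1$) and homogeneity, we obtain $\|h\|_{q(\cdot)}\ge M^{1/q^-}\|h\|_{p(\cdot)}$. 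Combined with boundedness of $I$, this shows that $I|_{[g_k]}$ is an isomorphism, contradicting DSS.

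The main obstacle is the modular comparison above: one needs $\mu(A_k)^{f(t)/p(t)}$ to be bounded below uniformly on $A_k$ by a positive constant, which reduces to controlling $\delta_k^{f(y_k)}$. The monotonicity of $p-q$ is used precisely to replace $f(t)$ on $A_k$ by its value at the left endpoint $y_k$, while the sparse selection ensures $\delta_k\ge (1-y_k)/2$, so that the failure of the limit hypothesis transfers directly into the desired uniform lower bound $M$.
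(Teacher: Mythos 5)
Your proof is correct and follows essentially the same route as the paper: assume the limit fails, build disjoint normalized indicator-type functions $\chi_{A_n}/\mu(A_n)^{1/p(t)}$ supported on sets near $1$ with $\mu(A_n)\gtrsim 1-x_n$, and use the monotonicity of $p-q$ to get a uniform modular comparison $\rho_{q(\cdot)}\ge M\,\rho_{p(\cdot)}$ on the span, contradicting DSS. Your version is slightly cleaner in two inessential ways — you take the $A_k$ to be intervals rather than level sets of $p-q$, and by normalizing in $\|\cdot\|_{p(\cdot)}$ you get $|a_k|\le 1$ for free instead of needing the paper's separate divergence argument — but the construction and the key estimate are the same.
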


     \begin{proof} Otherwise,
     there exist $r>0$ and some sequence  $(x_n)\nearrow 1$  such that
     $$ (1-x_n)^{(p-q)(x_n)}\ge r> 0
     $$ for all $n$. In fact, there is no loss of  generality in assuming $\frac{x_n +1}{2} < x_{n+1} $ and consequently  $(p-q)(\frac{x_n+1}{2})> (p-q) (x_{n+1})$ for all $n$. Call
     $$ A_n= (p-q)^{-1} ([(p-q)(\frac{x_n+1}{2}),(p-q)(x_n)])\supseteqq[x_n, \frac{x_n+1}{2}].
     $$ Notice that $(A_n)$ is pairwise disjoint as  $\frac{x_n +1}{2} < x_{n+1} $; in addition  $\mu(A_n)\geq \frac{1-x_n}{2}.$ Consider the sequence 
     $$ s_n = \frac{\chi_{A_n}}{\mu(A_n)^{\frac{1}{p(x)}}},
     $$ which is  pairwise disjoint and normalized in $\lpvo.$ Let $[s_n]$ be its closed span. We will prove that the restriction $I|_{[s_n]}$ is an isomorphism.
     For this, it suffices to check that $(s_n)$ and $(Is_n)$ are equivalent basic sequences. Since  $I$ is bounded, we only need to prove that 
     $\sum y_ns_n \in \lqvo $ implies $\sum y_ns_n \in \lpvo.$

     Observe that
     $$\rho_{p(\cdot)}( \sum y_n s_n) = \sum \int_{A_n} |y_n|^{p(x)}\frac{\chi_{A_n}}{\mu(A_n)}dx=
     \sum \int_{A_n} |y_n|^{q(x)}|y_n|^{(p-q)(x)}\frac{\chi_{A_n}}{\mu(A_n)^{\frac{q(x)}{p(x)}}\mu(A_n)^{1-\frac{q(x)}{p(x)}}}dx.
     $$ Notice that  $|y_n|<1$ perhaps up to  finitely many terms. Indeed, since $\mu(A_n)<1$ and  $\frac{1}{M}\leq\frac{1}{p(x)}\leq 1$ we would otherwise have
     \begin{align*} \rho_{q(\cdot)} (\sum y_ns_n) &\geq \sum \int_{A_n} \frac{1}{ \mu(A_n)^{\frac{q(x)}{p(x)}}}dx =
          \sum \int_{A_n} \frac{1}{\mu(A_n)^{\frac{p(x)+q(x)-p(x)}{p(x)}}}dx\\
          &=\sum \int_{A_n} \frac{1}{\mu(A_n)^{1-\frac{p(x)-q(x)}{p(x)}}}dx =
      \sum \int_{A_n} \frac{1}{\mu(A_n)} \mu(A_n)^{\frac{p(x)-q(x)}{p(x)}}dx \\
      &\geq \sum \int_{A_n} \frac{1}{\mu(A_n)} \mu(A_n)^{(p(x)-q(x))^+_{|_{A_n}}}dx \geq \sum \int_{A_n}\frac{1}{\mu(A_n)} (\frac{1-x_n}{2})^{(p-q)(x_n)}dx\\
      &\geq
          \sum r\int_{A_n}\frac{1}{\mu(A_n)} dx =\infty,
     \end{align*}
     which is not possible. From here we can write

 \begin{align*}
  \rho_{p(\cdot)} (\sum y_ns_n)&=\sum \int_{A_n} |y_n|^{q(x)}|y_n|^{(p-q)(x)}\frac{\chi_{A_n}}{\mu(A_n)^{\frac{q(x)}{p(x)}}\mu(A_n)^{1-\frac{q(x)}{p(x)}}}dx\\
    &\leq
    \sum \int_{A_n} |y_n|^{q(x)}\frac{\chi_{A_n}}{\mu(A_n)^{\frac{q(x)}{p(x)}}}[\frac{1}{\mu(A_n)}]^{1-\frac{q(x)}{p(x)}}dx \\
     &\leq
     \sum \int_{A_n} |y_n|^{q(x)}\frac{\chi_{A_n}}{\mu(A_n)^{\frac{q(x)}{p(x)}}}[\frac{1}{\mu(A_n)}]^{(p(x)-q(x))^{+}_{|_{A_n}}}dx\\
  &\leq \sum [\frac{1}{(\frac{1-x_n}{2})}]^{(p-q)(x_n)} \int_{A_n}|y_n|^{q(x)}\frac{\chi_{A_n}}{\mu(A_n)^{\frac{q(x)}{p(x)}}}dx \leq \frac{2^{p^+}}{r}\,\rho_{q(\cdot)}(\sum y_ns_n) <\, \infty.
  \end{align*}
  and the proof is finished.
     \end{proof}

As an application, consider the family of functions $$p(x)= q+(1-x)^{r}$$ on $[0,1]$ where $q\in[1,\infty)$ is fixed and $r$ runs in $[1,\infty)$. From the above follows that the  inclusions $ L^{p(\cdot)}[0,1] \hookrightarrow L^{q}[0,1]$ are  not DSS and, consequently (by \cite{GHR} Prop. 2.3), no $q$-convex rearrangement invariant space  $E[0,1] $ can be found between $L^{p(\cdot)}[0,1]$ and  $L^{q}[0,1]$.

We remark that in the previous proposition the assumption on the function $p-q$ being decreasing can be circumvented by considering $(p-q)^*$, the decreasing rearrangement of $p-q$. Indeed,  notice that
$ (p-q)^* : [0,1] \rw [0,M]$ and also $ \lim_{x\rw 1} (1-x)^{(p-q)^*(x)} \neq 0$.
 Defining as above
$$A_n = (p-q)^{-1} ([(p-q)^*(\frac{x_n+1}{2}),(p-q)^*(x_n)])$$ which also satisfies $ \mu(A_n)\geq\frac{1-x_n}{2}$, the following result is similarly obtained:

\begin{pro} \label{lem-3bis}  Let $p(\cdot)$ and $q(\cdot)$ be bounded exponent functions  with $q(\cdot)\le p(\cdot)\le p^+<\infty$. If
 the inclusion $I:\lpvo \hookrightarrow \lqvo $ is DSS,  then
     $$ \lim_{x\rw 1} (1-x)^{(p-q)^*(x)} =  0.$$

\end{pro}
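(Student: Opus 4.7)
The plan is to mimic the proof of Proposition \ref{lem-3} step by step, replacing $p-q$ with its decreasing rearrangement $(p-q)^{*}$ whenever monotonicity was used, and leaning on the equimeasurability identity $\mu((p-q)^{-1}(J))=\mu(((p-q)^{*})^{-1}(J))$, valid for every measurable $J\subset[0,M]$. The overall strategy is again by contradiction: assuming $\lim_{x\rw 1}(1-x)^{(p-q)^{*}(x)}\neq 0$, extract $r>0$ and $x_n\nearrow 1$ with $(1-x_n)^{(p-q)^{*}(x_n)}\geq r$, passing to a subsequence so that $\frac{x_n+1}{2}<x_{n+1}$.

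Setting $I_n:=\bigl((p-q)^{*}\bigl(\tfrac{x_n+1}{2}\bigr),\,(p-q)^{*}(x_n)\bigr]$, these intervals are pairwise disjoint because $(p-q)^{*}$ is decreasing and $\tfrac{x_n+1}{2}<x_{n+1}$; consequently so are the preimages $A_n:=(p-q)^{-1}(I_n)$. Since $(p-q)^{*}$ is decreasing, the inclusion $[x_n,\tfrac{x_n+1}{2})\subset ((p-q)^{*})^{-1}(I_n)$ holds, and equimeasurability yields $\mu(A_n)\geq\tfrac{1-x_n}{2}$. Furthermore, by construction $(p-q)^{+}_{|_{A_n}}\leq(p-q)^{*}(x_n)$. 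These are the two structural properties of $A_n$ that drove the proof of Proposition \ref{lem-3}.

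Next, I would consider the normalized pairwise disjoint sequence $s_n:=\chi_{A_n}/\mu(A_n)^{1/p(x)}$ in $\lpvo$ and show that $I|_{[s_n]}$ is an isomorphism, contradicting the DSS hypothesis. Since $I$ is bounded, it suffices to verify that $\sum y_n s_n\in\lqvo$ implies $\sum y_n s_n\in\lpvo$, with a bound on the modulars uniform in $(y_n)$, i.e. $\rho_{p(\cdot)}(\sum y_n s_n)\leq C\,\rho_{q(\cdot)}(\sum y_n s_n)$. Substituting $(p-q)^{*}(x_n)$ for $(p-q)(x_n)$ throughout the chains of inequalities of Proposition \ref{lem-3}, one first argues that $|y_n|\leq 1$ for large $n$ (otherwise the lower bound $\mu(A_n)^{(p-q)^{+}_{|_{A_n}}}\geq(\tfrac{1-x_n}{2})^{(p-q)^{*}(x_n)}\geq 2^{-M}r$ would force $\rho_{q(\cdot)}(\sum y_n s_n)=\infty$) and then concludes with $\rho_{p(\cdot)}(\sum y_n s_n)\leq(2^{M}/r)\,\rho_{q(\cdot)}(\sum y_n s_n)$.

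The delicate point, relative to the previous proposition, is the disjointness of the $A_n$: without monotonicity of $p-q$ the sets $A_n$ need not be intervals, so one must take the value intervals $I_n$ half-open to guarantee that $(p-q)^{-1}(I_n)\cap(p-q)^{-1}(I_{n+1})=\emptyset$. Once this is secured the remainder is mechanical, since every estimate in Proposition \ref{lem-3} depends only on $\mu(A_n)\geq\tfrac{1-x_n}{2}$, $(p-q)^{+}_{|_{A_n}}\leq(p-q)^{*}(x_n)$, and the reservoir $(1-x_n)^{(p-q)^{*}(x_n)}\geq r$, all of which survive the passage to the decreasing rearrangement.
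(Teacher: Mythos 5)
Your strategy is exactly the paper's: the paper only sketches this proposition, saying that one defines $A_n=(p-q)^{-1}\bigl([(p-q)^*(\frac{x_n+1}{2}),(p-q)^*(x_n)]\bigr)$, checks $\mu(A_n)\ge\frac{1-x_n}{2}$ via equimeasurability, and then repeats the argument of Proposition \ref{lem-3}. You have correctly isolated the three properties that drive that argument, namely $\mu(A_n)\ge\frac{1-x_n}{2}$, the bound $(p-q)^+_{|A_n}\le (p-q)^*(x_n)$, and pairwise disjointness of the $A_n$, and your final modular estimate $\rho_{p(\cdot)}(\sum y_ns_n)\le (2^{M}/r)\,\rho_{q(\cdot)}(\sum y_ns_n)$ is the right conclusion.

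The one step that does not hold as stated is the containment $[x_n,\frac{x_n+1}{2})\subset ((p-q)^{*})^{-1}(I_n)$ for your half-open interval $I_n=\bigl((p-q)^{*}(\frac{x_n+1}{2}),(p-q)^{*}(x_n)\bigr]$. Monotonicity of $(p-q)^*$ only gives $(p-q)^{*}(\frac{x_n+1}{2})\le(p-q)^*(x)\le (p-q)^*(x_n)$ for $x\in[x_n,\frac{x_n+1}{2})$; the strict lower inequality needed to land in the open end of $I_n$ fails wherever $(p-q)^*$ is constant on a subinterval of $[x_n,\frac{x_n+1}{2}]$ reaching up to $\frac{x_n+1}{2}$, and in the extreme case $(p-q)^*(x_n)=(p-q)^*(\frac{x_n+1}{2})$ you get $I_n=\emptyset$ and hence $A_n=\emptyset$. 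So the half-open trick buys disjointness at the price of the measure lower bound, which is equally indispensable. The repair is routine: set $c_n=(p-q)^*(x_n)$ and pass to a subsequence along which the $c_n$ are strictly decreasing. (If no such subsequence exists, $(c_n)$ is eventually constant equal to some $c$; then $(1-x_n)^{c}\ge r$ with $x_n\rw 1$ forces $c=0$, so $p=q$ on a set of positive measure and $I$ is an isometry on the corresponding band, hence not DSS.) Then take, say, $A_n=(p-q)^{-1}([c_{2n+1},c_{2n}])$: these are pairwise disjoint because the closed value-intervals $[c_{2n+1},c_{2n}]$ are, equimeasurability gives $\mu(A_n)\ge x_{2n+1}-x_{2n}\ge\frac{1-x_{2n}}{2}$, the values of $p-q$ on $A_n$ are at most $c_{2n}$ with $(1-x_{2n})^{c_{2n}}\ge r$, and the rest of your argument goes through verbatim.
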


Despite Propositions \ref{Prop-2} and \ref{lem-3} provide sufficient and necessary conditions for the   inclusion $I: \lpvo \hookrightarrow \lqv $  to be $DSS$, a complete characterization  still remains unknown.


\begin{thebibliography}{99}

\bibitem{AK}F. Albiac and N.J. Kalton, \emph{Topics in Banach space theory}, Springer (2006).

\bibitem{AM} S. Astashkin and  M. Mastylo,  \emph{Rademacher functions in Nakano spaces}, Anal. PDE 9 (2016), 1-14.
\bibitem{BH} C. Buelga and F.L. Hern\'{a}ndez, \emph{On $l_q$-strictly singular operators on variable exponent spaces}, Comment. Math. Prace Math. 55 (2015), no. 2, 147-155.

\bibitem{CF}D. Cruz-Uribe and A. Fiorenza,
\emph{Variable Lebesgue spaces.
Fundations and Harmonic Analysis}. Birkhausser (2013)

\bibitem{Libro}
L. Diening, P. Harjulehto, P. H{\"a}st{\"o} and M.
Ru\v{z}i\v{c}ka, \emph{Lebesgue and Sobolev spaces with variable
exponents}, Lecture Notes in Mathematics vol.~2017, Springer
(2011).

\bibitem{FHKT}J. Flores, F.L. Hern\'{a}ndez, N. Kalton and P. Tradacete,  \emph{Characterizations of strictly singular operators on Banach lattices }, J. London Math. Soc.  79 (2009), 612-630.


\bibitem{GHR} A. García del Amo, F. L. Hernández and C. Ruiz, \emph{Disjointly strictly singular operators and interpolation}. Proc. Roy. Soc. Edinburgh 126A (1996),1011-1026.

\bibitem{HR1}F.L. Hern\'{a}ndez and C. Ruiz,
\emph{$l_q$-structure of variable exponent spaces}, J. Math. Anal. Appl. 389 (2012), 899-907.
\bibitem{HR2}F.L. Hern\'{a}ndez and C. Ruiz, \emph{Averaging and orthogonal operators on variable exponent spaces $L^{p(\cdot)}(\Omega)$}, J. Math. Anal. Appl. 413 (2014), 139-153.

\bibitem{HR2corregido }F.L. Hernández and C. Ruiz, \emph{Corrigendum to “Averaging and orthogonal operators on variable
exponent spaces $L^{p(\cdot)}(\Omega )$”, J. Math. Anal. Appl. 413 (1) (2014) 139–153}, J. Math. Anal. Appl. 435 (2016), pp. 1018-1019.

\bibitem{LT1}
J.~Lindenstrauss and L.~Tzafriri, \emph{{Classical Banach Spaces
I}},
  Springer-Verlag (1977).
\bibitem{LT2}
J.~Lindenstrauss and L.~Tzafriri, \emph{Classical Banach Spaces
II},
  Springer-Verlag (1979).

\bibitem{LPP}
J. Lukes, L.Pick and  D.Pokorny, \emph{On geometric properties of
the spaces $L^{p(x)}$}, Rev. Mat. Complut. \textbf{24}
(2011),111-130.

\bibitem{LuxZa} W.A.J. Luxemburg and A.C. Zaanen, \emph{Riesz spaces I}, North-Holland (2000).

\bibitem{Me} P. Meyer-Nieberg, \emph{Banach Lattices}, Springer-Verlag (1991).

\bibitem{Musielak}
J.~Musielak, \emph{{Orlicz Spaces and Modular Spaces}}, Lecture
Notes in Mathematics vol.~1034, Springer (1983).



\bibitem{P-R}
L.P. Poitevin and Y.~Raynaud, \emph{{Ranges of positive
contractive projections in Nakano Spaces}},
 Indag. Math. \textbf{19}  (2008),   441-464.
\bibitem{R-S}C. Ruiz and V.M. S\'{a}nchez, \emph{ Subprojective Nakano spaces},  J. Math. Anal. Appl. 458 (2018) 332-344.




\bibitem{W} L. Weis, \emph{On perturbations of Fredholm operators in $L_{p}$-spaces} Proc. Americ. Math.Soc. 67 (1977), 287-292. 


\bibitem{Wh}R.J. Whitley, {\em Strictly singular operators and their conjugates}, Trans. Amer. Math. Soc. 113 (1964), 252-261.

\bibitem{Za}  A.C. Zaanen, \emph{Riesz spaces II}, North-Holland (1983).
\end{thebibliography}
\end{document}